\def\R{\mathbb{R}}
\def\F{\mathbb{F}}
\newtheorem{theorem}{Theorem}[section]
\newtheorem{lemma}[theorem]{Lemma}
\newtheorem{proposition}[theorem]{Proposition}
\newtheorem{corollary}[theorem]{Corollary}
\newtheorem*{thm:intro}{Theorem \ref{thm:realization}}
\theoremstyle{definition}
\newtheorem{problem}[theorem]{Problem}
\newtheorem{definition}[theorem]{Definition}
\newtheorem{remark}[theorem]{Remark}
\newtheorem{example}[theorem]{Example}
\def\Int{\operatorname{Int}}
\def\es{\operatorname{S}}
\def\de{\operatorname{D}}
\def\reeb#1{\mathcal{R}(#1)}
\def\ind{\operatorname{ind}}
\def\degin{\deg_{in}}
\def\degout{\deg_{out}}
\def\corank{\operatorname{corank}}
\def\IP{\operatorname{IP}}
\def\DP{\operatorname{DP}}
\def\cl{\operatorname{cl}}
\tikzset{snake it/.style={-stealth,
		decoration={snake, 
			amplitude = .4mm,
			segment length = 2mm,
			post length=0.9mm},decorate}}
\def\@addpunct#1{%
	\relax\ifhmode
	\ifnum\spacefactor>\@m \else#1\fi
	\fi}
\newcommand{\keywordsname}{$2010$ Mathematics Subject Classification}
\def\@setkeywords{%
	{\itshape \keywordsname.}\enspace \@keywords\@addpunct.}
\def\keywords#1{\def\@keywords{#1}}
\let\@keywords=\@empty
\g@addto@macro{\maketitle}{\begingroup%
	\let\@makefnmark\relax  \let\@thefnmark\relax%
	\ifx\@keywords\@mpty\else\@footnotetext{\@setkeywords}\fi%
	\endgroup}
\keywords{05C76, 57M15, 05C38.  \\
	\indent\indent{\itshape Key words and phrases}. Reeb graph,	Morse function,	corank of the fundamental group.\\\indent\indent The author was supported by the Polish Research Grants: NCN UMO-2015/19/B/ST1/01458 and \\
	\indent\indent NCN Sheng 1 UMO-2018/30/Q/ST1/00228. \\
	\indent\indent The final publication is available at link.springer.com: https://doi.org/10.1007/s00454-020-00260-6}
\newcommand{\address}{{ \bigskip

\footnotesize
	{\noindent\textsc{\L{}ukasz Patryk Michalak}\\
		Adam Mickiewicz University, Pozna\'n\\
		Faculty of Mathematics and Computer Science\\
		ul.~Uniwersytetu Pozna\'nskiego 4, 61-614 Pozna\'n, Poland} \\
		\textit{E-mail address:} \texttt{lukasz.michalak@amu.edu.pl}
		
}}
\date{}
\title{Combinatorial modifications of Reeb graphs and~the realization problem}
\author{\L{}ukasz Patryk Michalak}
\begin{document}

	\maketitle
	\begin{abstract}
		We prove that, up to homeomorphism, any graph subject to natural necessary conditions on orientation and the cycle rank can be realized as the Reeb graph of a Morse function on a given closed manifold $M$. Along the way, we show that the Reeb number $\mathcal{R}(M)$, i.e. the maximum cycle rank among all Reeb graphs of functions on $M$, is equal to the corank of fundamental group $\pi_1(M)$, thus extending a previous result of Gelbukh to the non-orientable case.
	\end{abstract}

%%%%%%%%%%%%%%%%%%%%%%%%%%%%%%%%%%%%%%%%%%%%%%%%%%	
\section{Introduction} 		     
%%%%%%%%%%%%%%%%%%%%%%%%%%%%%%%%%%%%%%%%%%%%%%%%%%	

Let $M$ be a closed manifold and $f\colon M\rightarrow \R$ be a~smooth function with finitely many critical points. The Reeb graph of $f$, denoted by $\reeb{f}$, is obtained by contracting the connected components of level sets of the function. It was introduced by Reeb \cite{Reeb} in 1946 and now it plays a~fundamental role in computational topology for shape analysis (see~\cite{Biasotti}). Reeb graphs can also be used for classification of functions, for example they classify up to conjugation simple Morse--Bott functions on closed orientable surfaces (see \cite{Morse-Bott}).

The starting point for this paper is the following natural problem.

\begin{problem}\label{main_problem} For a given manifold $M$, which graph $\Gamma$ can be realized as the Reeb graph of a function $f\colon M \rightarrow \R$ with finitely many critical points?
\end{problem}

The author gave an answer to the question in Problem \ref{main_problem} for surfaces \cite[Theorem 5.4 and 5.6]{Michalak}.
In order to state it we introduce the following notation. By the cycle rank of a graph $\Gamma$ we mean its first Betti number $\beta_1(\Gamma)$. Define the Reeb number $\reeb{M}$ of a manifold $M$ to be the maximum cycle rank among all Reeb graphs of functions on $M$ with finitely many critical points. Recall that each Reeb graph admits the so-called good orientation (see Definition~\ref{definition:good_orientation}, cf.~\cite{Sharko},~\cite{Saeki}). 	
Now, it turns out that, except the complete graph on two vertices, each graph $\Gamma$ with good orientation can be realized up to isomorphism of oriented graphs as the Reeb graph of a function with finitely many critical points on a~given closed surface $\Sigma$, provided that $\beta_1(\Gamma) \leq \reeb{\Sigma}$.

In general, a significant amount of work on Reeb graphs is concerned with functions on surfaces (see \cite{Edelsbrunner}, \cite{Fabio-Landi}, \cite{KMS}, \cite{Kudryavtseva}, \cite{Morse-Bott}, \cite{Saeki}, \cite{Michalak}). Although in a recent paper Gelbukh \cite{Gelbukh:Reeb_graph} described all possible cycle ranks of Reeb graphs of Morse functions on a~closed orientable manifold of an arbitrary dimension $n\geq 2$. The question of realizability of cycle ranks in Reeb graphs is a special case of the problem. 
In this paper we settle Problem \ref{main_problem} as follows.

\begin{thm:intro}
	Let $M$ be a closed, connected $n$-dimensional manifold, $n\geq 2$, and $\Gamma$ be a finite oriented graph. There exists a Morse function $f\colon M \to \R$ such that $\reeb{f}$ is orientation-preserving homeomorphic to~$\Gamma$ if and only if $\Gamma$ has a good orientation and $\beta_1(\Gamma) \leq \reeb{M}$. Moreover, if $M$ is not an orientable surface and the maximum degree of a vertex in $\Gamma$ is not greater than $3$, then $f$ can be taken to be simple. 
\end{thm:intro}

The realization in Theorem \ref{thm:realization} is up to homeomorphism of graphs, not the combinatorial isomorphism, so we lose information about the vertices of degree $2$. The first step in the proof is to reduce the problem to realizability of graphs with vertices of degrees $1$ and $3$ by simple Morse functions, i.e. Morse functions which have critical points on distinct levels. Then the construction begins with the existence of a particular Reeb graph with a~given cycle rank, called the initial graph, which is provided by the proof of Theorem \ref{thm:equivalent_conditions}. From this graph we can obtain each homeomorphism type of graphs using a~finite number of combinatorial modifications of Reeb graphs realized by change of simple Morse functions. These modifications come from handle and Morse theory. We prove in Proposition \ref{proposition:canonical_form} that each Reeb graph can be transformed to a canonical form using a finite number of such modifications. In the case of orientable surfaces similar operations have been used by Kudryavtseva \cite[Theorem~1]{Kudryavtseva} and Fabio--Landi \cite[Lemma 2.6]{Fabio-Landi} to prove an~analogous reduction to a~canonical form.

The aforementioned Theorem \ref{thm:equivalent_conditions} is crucial. It states the equivalence of the following three conditions for a closed manifold $M$:
\begin{itemize}
	\item the existence of epimorphism $\pi_1(M)\to\F_r$ onto the free group of rank $r$,
	\item the existence of $r$ disjoint submanifolds $N_1,\ldots, N_r \subset M$ of codimension one with product neighbourhoods, removal of whose does not disconnect~$M$,
	\item the existence of a Morse function on $M$ (simple, if $M$ is not an orientable surface) whose Reeb graph has cycle rank equal to $r$.
\end{itemize}
We conclude that the Reeb number $\reeb{M}$ is equal to corank of fundamental group of $M$ and that any number not greater than $\reeb{M}$ occurs as the cycle rank of the Reeb graph of a Morse function. This theorem is an extension of Cornea \cite[Theorem 1]{Cornea} and Jaco \cite[Theorem 2.1]{Jaco}, where the first two conditions are considered (we note that Jaco works in the category of combinatorial manifolds), and it is a generalization of Gelbukh \cite[Theorem 13]{Gelbukh:Reeb_graph} to non-orientable manifolds.

The paper is organized as follows. In Section \ref{section:basic_notions} we introduce basic notions and properties of Reeb graphs. Section \ref{section:index_and_degree} establishes the relation between the index of a critical point of a simple Morse function and the degree of the corresponding vertex in the Reeb graph. We conclude that the Reeb graph of a~self-indexing Morse function is a tree. In Section \ref{section:combinatorial_mod} we introduce the combinatorial modifications and perform the reduction of the Reeb graph of a simple Morse function to the canonical form. As a consequence, any number between $0$~and $\reeb{M}$ can be realized as the cycle rank of the Reeb graph of a~Morse function on a given manifold $M$. In Section \ref{section:reeb_number_and_corank} we prove Theorem \ref{thm:equivalent_conditions}. Finally, in Section \ref{section:realization} we prove Theorem \ref{thm:realization}.

%%%%%%%%%%%%%%%%%%%%%%%%%%%%%%%%%%%%%%%%%%%%%%%%%%

%%%%%%%%%%%%%%%%%%%%%%%%%%%%%%%%%%%%%%%%%%%%%%%%%%
\section{Basic notions}\label{section:basic_notions}
%%%%%%%%%%%%%%%%%%%%%%%%%%%%%%%%%%%%%%%%%%%%%%%%%%

Throughout the paper we assume that all manifolds are compact, smooth, connected of dimension $n \geq 2$ and that all graphs are finite and connected.

A \textbf{smooth triad} is a triple $(W,W_-,W_+)$, where $W$ is a manifold and its boundary $\partial W = W_- \sqcup W_+$ is the disjoint union of $W_-$ and $W_+$ (possibly $W_\pm = \varnothing$). A~function on a smooth triad $(W,W_-,W_+)$ is a smooth function $f\colon W \to [a,b]$ such that $f^{-1}(a) = W_-$, $f^{-1}(b)=W_+$ and all critical points of $f$ are contained in $\Int W$. 

\begin{definition}
	Let $f\colon W \to \R$ be a function with finitely many critical points on a smooth triad $(W,W_-,W_+)$. We define the \textbf{Reeb relation} $\sim_{\mathcal{R}}$ on $W$: $x\sim_{\mathcal{R}}y$ if and only if they are in the same connected component of a level set of~$f$. The quotient space $W/\!\sim_{\mathcal{R}}$ is denoted by $\reeb{f}$ and called the \textbf{Reeb graph} of the function $f$.
\end{definition}

The Reeb graph of the function $f$ as above is homeomorphic to a finite graph, i.e. to a one-dimensional finite CW-complex (see \cite{Reeb}, \cite{Sharko}). The vertices of~$\reeb{f}$ corresponds to the components of $W_\pm$ and to the components of level sets of $f$ containing critical points. By the quotient topology, $f$ induces the continuous function $\overline{f}\colon \reeb{f} \to \R$ such that $f = \overline{f}\circ q$, where $q\colon W \to \reeb{f}$ is the quotient map.

The \textbf{degree} $\deg(v)$ of a vertex $v$ in a graph $\Gamma$ is the number of edges incident to~$v$. If $\Gamma$ is an oriented graph (i.e. directed --- each edge has a chosen direction), then the \textbf{indegree} (\textbf{outdegree}) of $v$ is the number of edges incoming (outgoing) to $v$ and it is denoted by $\degin(v)$ ($\degout(v)$). Thus $\deg(v) = \degin(v)+\degout(v)$.

\begin{figure}[h]
	\centering
	\begin{tikzpicture}[scale=0.9]
		%	\draw [->] (-2.5,-1.5) -- (-3.1,-2.1) node[align=left, black, above] {\footnotesize x};
		%	\draw [->] (-2.5,-1.5) -- (-1.5,-1.75) node[align=left, black, above] {\footnotesize y};
		%	\draw [->] (-2.5,-1.5) -- (-2.5,-0.5) node[align=left, black, above] {\footnotesize z};

		\draw[rotate=90, scale = 0.8] (-1,0) to[bend left] (1,0);
		\draw[rotate=90, scale = 0.8] (-1.2,.1) to[bend right] (1.2,.1);
		\draw[rotate=90, scale = 0.5] (0,0) ellipse (100pt and 50pt);
		
		\draw [->] (1.5,0) -- (2.5,0);
		\draw (2,0.25) node { $q$};

		\draw[dashed] (-1,1.75) -- (8,1.75);
		\draw[dashed] (-1,-1.75) -- (8,-1.75);
		\draw[dashed] (-1,0.8) -- (8,0.8);
		\draw[dashed] (-1,-0.8) -- (8,-0.8);
		
		\draw [->] (4.8,0) -- (5.8,0);
		\draw (5.3,0.28) node { $\overline{f}$};
		
		%%%%%%%%%%%%%%%%%%%%%%%%%%%%%%%%%%%%%%%%%%	R(f)
		\draw (3.5,0.8) -- (3.5,1.75);
		\draw (3.5,-0.8) -- (3.5,-1.75);

		\draw (3.5,0.8) to[out=225,in=135] (3.5,-0.8);
		\draw (3.5,0.8) to[out=315,in=45] (3.5,-0.8);
		
		%%%%%%%%%%%%%%%%%%%%%%%%%%%%%%%%%%%%%%$$$ \R

		\draw (7,2) -- (7,-2);
		
		%%%%%%%%%%%%%%%%%%%%%%%%%%%%%%%%%%%%%%%%%%555	

		\filldraw (0,0.8) circle (1.5pt);% node[align=left, black, above] {\footnotesize $(\pi,0)$};
		\filldraw (0,-0.8) circle (1.5pt);% node[align=left, black, below] {\footnotesize $(\pi,\pi)$};
		\filldraw (0,1.75) circle (1.5pt);% node[align=left, black, above] {\footnotesize $(0,0)$};
		\filldraw (0,-1.75) circle (1.5pt);% node[align=left, black, below] {\footnotesize $(0,\pi)$};
		
		\filldraw (3.5,0.8) circle (1.5pt) ;
		\filldraw (3.5,-0.8) circle (1.5pt) ;
		\filldraw (3.5,1.75) circle (1.5pt) ;
		\filldraw (3.5,-1.75) circle (1.5pt);
		
		\filldraw (7,0.8) circle (1.5pt) ;
		\filldraw (7,-0.8) circle (1.5pt) ;
		\filldraw (7,1.75) circle (1.5pt) ;
		\filldraw (7,-1.75) circle (1.5pt);

		\draw (0,-2.5) node { $\es^1\times\es^1$};
		\draw (3.5,-2.5) node { $\reeb{f}$};
		\draw (7,-2.5) node { $\R$};

		\draw[->] (0.1,-2.75) to[out=345,in=195] (6.8,-2.75);
		\draw (3.5,-3.53) node { $f$};
		
		\draw [->] (1,-2.5) -- (2.7,-2.5);
		\draw (1.85,-2.25) node { $q$};
		
		\draw [->] (4.3,-2.5) -- (6.3,-2.5);
		\draw (5.1,-2.22) node { $\overline{f}$};

		%	\draw (0,-2.5) node { $\es^1\times\es^1$};
		%	\draw (3.5,-3.5) node { $\R$};
		%	\draw (7,-2.5) node { $\reeb{f}$};
		%	
		%	
		%	\draw [->] (1,-2.5) -- (6,-2.5);
		%	\draw (3.5,-2.25) node { $q$};
	\end{tikzpicture}
	\caption{Example of the Reeb graph of height function $f\colon \es^1\times\es^1 \to \R$ on two-dimensional torus. Four critical points of $f$ have different values and correspond to four vertices in $\reeb{f}$.}\label{figure:Reeb_graph_example}
\end{figure}

\begin{definition}\label{definition:good_orientation}
	A \textbf{good orientation} of a graph $\Gamma$ is the orientation induced by a continuous function $\Gamma \to \R$ that has extrema only in the vertices of degree $1$ and which is strictly monotonic on the edges.
\end{definition}

It is easy to see that the function $\overline{f}$ on $\reeb{f}$ induces a good orientation (cf.~Figure \ref{figure:Reeb_graph_example}). This important concept was studied comprehensively by Sharko \cite{Sharko}, while the name we use comes from Masumoto--Saeki \cite{Saeki}. Sharko used an equivalent definition: an oriented graph has a good orientation if and only if the following conditions are satisfied:
\begin{itemize}
	\item it has at least two vertices of degree $1$, one with incoming and one with outgoing edge,
	\item any vertex of a higher degree has both incoming and outgoing edge,
	\item it does not have oriented cycles.
\end{itemize}
A basic observation is that a graph with good orientation does not have loops, i.e. edges that connect a vertex to itself. Sharko provided an example of a graph which cannot admit a good orientation (see Figure \ref{figure:good_orientation_counterexample} (a)). Any attempt to orient this graph causes failure of one of the above conditions. However, the graph presented in Figure \ref{figure:good_orientation_counterexample} (b) can be oriented in a good way.
\begin{figure}[h]
	\centering
	\begin{tikzpicture}[scale=0.75]
		
		\draw (0,2) -- (0,1);
		\draw (0,1) -- (0,0);
		\draw (0,1) -- (1,1);
		\draw (2,0) -- (2,2);
		
		\draw (2,1) circle (1cm);
		
		\filldraw (0,0) circle (1.5pt);
		\filldraw (0,1) circle (1.5pt) ;
		\filldraw (0,2) circle (1.5pt) ;
		\filldraw (1,1) circle (1.5pt);%  node[align=left, right] {C};
		\filldraw (2,2) circle (1.5pt);% node[align=left, above] {B};
		\filldraw (2,0) circle (1.5pt) ;% node[align=left, below] {A};
		
		\draw (1.5,-0.5) node {(a)};
		
		%%%%%%%%%%%%%%%%%%%%%%%%%%%%%%55
		
		\draw (6,2) -- (6,1);
		\draw (6,1) -- (6,0);
		\draw (6,1) -- (7,1);
		\draw (8,0) -- (8,2);
		\draw (8,0) -- (8.5,1);
		
		\draw (8,1) circle (1cm);
		
		\filldraw (6,0) circle (1.5pt);
		\filldraw (6,1) circle (1.5pt) ;
		\filldraw (6,2) circle (1.5pt) ;
		\filldraw (7,1) circle (1.5pt); %  node[align=left, right] {C};
		\filldraw (8,2) circle (1.5pt); % node[align=left, above] {B};
		\filldraw (8,0) circle (1.5pt); % node[align=left, below] {A};
		\filldraw (8.5,1) circle (1.5pt);
		
		\draw (7.5,-0.5) node {(b)};
		
	\end{tikzpicture}
	\caption{(a) Graph not admitting a good orientation \cite[Figure 1]{Sharko} and (b) graph admitting~it.}
	\label{figure:good_orientation_counterexample}
\end{figure}

Let $\Gamma$ be a graph with good orientation induced by $g\colon \Gamma \to \R$. A path $\tau \colon [0,1] \rightarrow \Gamma$ is \textbf{increasing}, if $g(\tau(t)) < g(\tau(t'))$ for $0 \leq t < t' \leq 1$. Similarly we define a \textbf{decreasing} path. For two vertices $v$ and $w$ in $\Gamma$ we say that $v$ is \textbf{below} $w$ and $w$ is \textbf{above} $v$ if there is an increasing path from $v$ to $w$. It is clear that these definitions do not depend on the choice of the function $g$. According to this notation, orientations of graphs presented in figures in this paper are from the bottom to the top, as we see in Figure \ref{figure:Reeb_graph_example}.

Let $\Gamma = \Gamma(V,E)$ be a graph, where $V$ and $E$ are the sets of vertices and edges of $\Gamma$, respectively. The \textbf{cycle rank} of $\Gamma$ is defined by its first Betti number $\beta_1(\Gamma)$. Clearly, $\beta_1(\Gamma) = |E|-|V|+1$ and the fundamental~group~of~$\Gamma$~is $\pi_1(\Gamma) \cong \F_{\beta_1(\Gamma)}$, where $\F_r$ is the free group of rank $r\geq 0$ ($\F_0$ is the trivial~group).
% If $\beta_1(\Gamma) = r$, then we say that $\Gamma$ has $r$ \textbf{independent cycles}.

\begin{remark}
	The cycle rank of a graph is also called the \emph{number of loops} \mbox{(cf. \cite{Edelsbrunner,Gelbukh:Reeb_graph}).}
\end{remark}

\begin{definition}
	The \textbf{Reeb number} $\reeb{M}$ of a closed manifold $M$ is the maximum cycle rank among all Reeb graphs of smooth functions on $M$ with finitely many critical points.
\end{definition}

From \cite{KMS} we know that $\beta_1(\reeb{f}) \leq \beta_1(M)$, so $\reeb{M}$ is well-defined and $\reeb{M}\leq \beta_1(M)$. If $\Sigma$ is a closed surface of the Euler characteristic $\chi(\Sigma) = 2-k$, then $\reeb{\Sigma} = \left\lfloor \frac{k}{2} \right\rfloor$, where $\left\lfloor x\right\rfloor$ is the floor of $x$ (see \cite{Gelbukh:Reeb_graph}, \cite[Corollary 3.8]{Michalak}).

\begin{remark}\label{remark:reeb_nubmer_continuous}
	Note that the notions of Reeb graph and Reeb number can be considered for larger classes of spaces and maps. Gelbukh \cite{Gelbukh:filomat} investigated the class of continuous functions $f\colon X \to \R$ on a connected and locally path-connected topological space $X$ with the additional assumption that the quotient space $\reeb{f}$ obtained by using the Reeb relation is a finite topological graph. She easily showed that $\beta_1(\reeb{f})$ is bounded in such a case by the corank of $\pi_1(X)$ (see Definition \ref{definition:corank}). Considering the above class of continuous functions in the definition of the Reeb number of a closed manifold  provides the same quantity (cf.~Corollary~\ref{corollary:corank=reeb_number}). However, we do not know and there are not provided any necessary and sufficient conditions on a continuous function $f$ under which $\reeb{f}$ is a finite graph. What is known is the case of smooth functions with isolated critical points on a compact manifold (thus with a finite number of them by compactness) which is considered in this paper. It is worth pointing out that Gelbukh also provided examples of spaces and maps for which the inequality $\beta_1(\reeb{f}) \leq \corank(\pi_1(X))$ does not hold (e.g. for the Warsaw circle).

\end{remark}

%%%%%%%%%%%%%%%%%%%%%%%%%%%%%%%%%%%%%%%%%%%%%%%%%%

%%%%%%%%%%%%%%%%%%%%%%%%%%%%%%%%%%%%%%%%%%%%%%%%%%
\section{Index and degree correspondence} \label{section:index_and_degree}
%%%%%%%%%%%%%%%%%%%%%%%%%%%%%%%%%%%%%%%%%%%%%%%%%%

In this section we discuss how some properties of Morse functions affect the Reeb graph and its cycle rank. First of all, we present the correspondence between the index of critical point of a simple Morse function and the degree of the corresponding vertex in the Reeb graph (Proposition \ref{proposition:correspondece_degree_index}). This is one of the basic ingredients needed in the realization theorem (Theorem \ref{thm:realization}) and for introducing combinatorial modifications of Reeb graphs. We also show that one can always take a function whose Reeb graph is a tree (Proposition \ref{proposition:ordered_has_tree}) what will be used in the realization of a graph in the initial form as the Reeb graph (Theorem \ref{thm:equivalent_conditions} and Proposition \ref{proposition:initial_graph}).

Recall that a smooth function $f\colon W \rightarrow \R$ on a triad $(W,W_-,W_+)$ is called a \textbf{Morse function} if all its critical points are non-degenerate. The index of a non-degenerate critical point $p$ is denoted by $\ind(p)$. We say that $f$ is:
\begin{itemize}
	\item \textbf{simple} if on every critical level there is exactly one critical point,
	\item \textbf{self-indexing} if there exist values $c_0 < c_1 < \ldots < c_n$ (where $n=\dim W$) such that for any critical point $p$ if $\ind(p) = i$, then $f(p)=c_i$,
	\item \textbf{ordered} if for any two critical points $p$ and $p'$ if $\ind(p) < \ind(p')$, then $f(p) < f(p')$.
\end{itemize}

In other words, ordered Morse functions have the property that critical points of a smaller index are below the critical points of a larger index. Self-indexing Morse functions are obviously ordered. In fact, they can be defined as ordered Morse functions for which all critical points of the same index have the same value. By \cite[ Lemma 2.8 and Theorem 4.8]{Milnor} for any Morse function on a~triad $(W,W_-,W_+)$ there exist a self-indexing Morse function and a~simple Morse function, both with the same critical points each with the same index.

By \cite[Lemma 3.5]{Michalak} the Reeb number of a closed manifold $M$ can be attained by simple Morse functions, so
\[
\reeb{M} =\max \left\{\, \beta_1(\reeb{f})\, |\, f\colon M \rightarrow \R \text{ is a simple Morse function}\, \right\}\!.
\]
On the other hand, Reeb graphs of self-indexing Morse functions are always trees. In general, ordered Morse functions have also Reeb graphs with no cycles if a manifold is of dimension at least three (see Proposition \ref{proposition:ordered_has_tree}).

For a manifold $W$, a function $f\colon W\to \R$, $c\in \R$ and any interval $I\subset \R$ we use the following notation:
$$W_c := f^{-1}(c),\qquad W^c := f^{-1}((-\infty,c]),\qquad W^I := f^{-1}(I).$$

The following proposition comes from original paper of Reeb \cite{Reeb}.

\begin{proposition}[{\cite[Th\'{e}or\`{e}me 3]{Reeb}}] \label{proposition:correspondece_degree_index}
	Let $f\colon W \rightarrow \R$ be a simple Morse function on an $n$-dimensional smooth triad $(W,W_-,W_+)$, $n \geq 3$. Let $p$ be a~critical point of $f$ and $v:= q(p)$ be the vertex in $\reeb{f}$ which corresponds to~$p$. Then
	$$
	\deg (v) =  \begin{cases}
		1	        &\ \ \ \ \text{if $\ind(p) = 0 \text{ or } n$}, \\
		2 \text{ or } 3	        &\ \ \ \ \text{if $\ind(p) = 1 \text{ or } n-1$},\\
		2		  &\ \ \ \ \text{in other cases}.
	\end{cases}$$
	$$\ind(p) =  \begin{cases}
		0 \text{ or } n	      				  &\ \ \ \ \text{if $\deg(v) = 1$},\\
		1         				  &\ \ \ \ \text{if $\deg(v) = 3$ and $\degin(v) = 2$}, \\
		n-1	      				  &\ \ \ \ \text{if $\deg(v) = 3$ and $\degout(v) = 2$},\\
		1  \text{ or }\ldots  \text{ or } n-1			  &\ \ \ \ \text{if $\deg(v) = 2$}.
	\end{cases}$$
\end{proposition}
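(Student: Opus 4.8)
The plan is to reduce everything to a local handle-theoretic analysis near the critical point $p$. Since $f$ is simple and has only finitely many critical points, I would first choose $\epsilon > 0$ so small that $c := f(p)$ is the unique critical value in $[c-\epsilon,c+\epsilon]$ and $p$ is the only critical point on $W_c$. Let $N_0$ be the connected component of $W^{[c-\epsilon,c+\epsilon]}$ containing $p$. Then $(N_0,\partial_- N_0,\partial_+ N_0)$, with $\partial_\mp N_0 := N_0\cap W_{c\mp\epsilon}$, is a smooth triad carrying the single critical point $p$ of index $\lambda := \ind(p)$, and the component $K := q^{-1}(v)$ of $W_c$ lies inside $N_0$.

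The first key step is to identify the incoming and outgoing degrees of $v$ with the numbers of boundary components of $N_0$: I claim $\degin(v) = |\pi_0(\partial_- N_0)|$ and $\degout(v) = |\pi_0(\partial_+ N_0)|$. Indeed, there are no critical points of $f$ in $N_0\cap W^{[c-\epsilon,c)}$, so the gradient flow shows that each level $W_t\cap N_0$ with $t\in(c-\epsilon,c)$ is diffeomorphic to $\partial_- N_0$; hence $\reeb{f|_{N_0}}$ is a star whose central vertex is $v$ and whose edges below $v$ are in bijection with the components of $\partial_- N_0$. The argument above $c$ is symmetric.

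The second step is to count these components via the handle decomposition. By the Morse Lemma, $N_0$ is obtained from $\partial_- N_0\times[0,1]$ by attaching a single $\lambda$-handle $D^\lambda\times D^{n-\lambda}$ along $S^{\lambda-1}\times D^{n-\lambda}$, and dually $\partial_+ N_0$ arises from $\partial_- N_0$ by surgery. The component count is then governed by the connectivity of the attaching sphere $S^{\lambda-1}$: it is empty for $\lambda = 0$, is two points for $\lambda = 1$, and is connected for $\lambda\geq 2$. Using that $N_0$ is connected, I would conclude: for $\lambda = 0$ (a local minimum) $\partial_- N_0 = \varnothing$, so $\degin(v)=0$; for $\lambda\geq 2$ the attaching sphere meets a single component, forcing $|\pi_0(\partial_- N_0)| = 1$; and for $\lambda = 1$ the two feet of the handle either lie in the same component of $\partial_- N_0$ (giving $\degin(v)=1$) or in two different components that the handle then merges (giving $\degin(v)=2$). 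Passing to $-f$ interchanges $\partial_\pm$ and replaces $\lambda$ by $n-\lambda$, which yields the mirror statements for $\degout(v)$.

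Assembling the resulting table gives the proposition once I invoke $n\geq 3$: this hypothesis guarantees that for $\lambda\in\{1,n-1\}$ exactly one side is the ambiguous case while the other is forced to a single component (e.g.\ for $\lambda = 1$ one needs $1\leq n-2$ so that $\degout(v)=1$). Thus $\lambda = 0$ or $n$ gives $\deg(v) = 1$; the middle indices $2\leq\lambda\leq n-2$ give $\deg(v) = 2$; and $\lambda\in\{1,n-1\}$ give $\deg(v)\in\{2,3\}$ with the stated correlation between which side carries two edges and whether $\lambda = 1$ or $\lambda = n-1$. Reading the same table backwards yields the second displayed formula. I expect the main obstacle to be the index-$1$ (equivalently index-$(n-1)$) case of the second step, since there $\deg(v)\in\{2,3\}$ is not determined by the local model alone but by the global question of whether the two feet of the $1$-handle lie in one or two components of $\partial_- N_0$; this is the only place where genuinely non-local information enters.
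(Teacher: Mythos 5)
Your argument is correct and is essentially the paper's own proof: the paper disposes of the proposition in one sentence by citing the fact that $W^{c+\varepsilon}$ is $W^{c-\varepsilon}$ with a $k$-handle attached along $\es^{k-1}\times\de^{n-k}$, and your proposal is a careful expansion of exactly that handle-attachment count (components of the attaching sphere, plus the $-f$ duality and the use of $n\geq 3$).
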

\begin{proof}
	The proposition follows easily from the fact, that by Morse theory the manifold $W^{c+\varepsilon}$ is homeomorphic to the manifold $W^{c-\varepsilon}$ with $k$-handle attached by an embedding  $\varphi \colon \es^{k-1} \times \de^{n-k} \rightarrow W_{c-\varepsilon}$, where $c=f(p)$, $k=\ind(p)$ and $\varepsilon >0$ is sufficiently small. 
\end{proof}
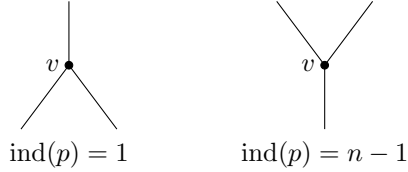
\begin{figure}[h]
	\centering
	\begin{tikzpicture}[scale=0.9]

		\filldraw (0,0) circle (1.7pt)   node[align=left, right] {} node[align=right, left] {$v$};
		\filldraw (4,0) circle (1.7pt)   node[align=left, right] {} node[align=right, left] {$v$};
		
		\draw (0,0) -- (0,1);
		\draw (0,0) -- (-0.75,-1);
		\draw (0,0) -- (0.75,-1);

		\draw (4,0) -- (4,-1);
		\draw (4,0) -- (3.25,1);
		\draw (4,0) -- (4.75,1);
		
		\draw (0,-1.4) node{$\ind(p)=1$};
		\draw (4,-1.4) node{$\ind(p)=n-1$};

		%%%%%%%%%%%%%
	\end{tikzpicture}
	\caption{Possible neighbourhoods of a vertex of degree $3$ in the Reeb graph of a~simple Morse function on a manifold of dimension at least $3$. We use our convention that the orientation is from the bottom to the top.}\label{rys:deg_3}
\end{figure}

For simplicity, we define the \textbf{index} of $v$ to be the index of $p$. We also extend this definition for arbitrary graphs with good orientation --- the index of a~vertex $v$ of degree $3$ is $1$ if $\degin(v)=2$ and $n-1$ if $\degout(v)=2$.

By the correspondence in the above proposition if $\reeb{f}$ contains a cycle, then some 
vertex of degree 3 of index $n-1$ is below a vertex of degree $3$ of index~$1$.

\begin{proposition}\label{proposition:ordered_has_tree}
	Let $f\colon W\rightarrow \R$ be an ordered Morse function on a smooth triad $(W,W_-,W_+)$ of dimension $n\geq 3$. Then $\reeb{f}$ is a tree. In particular, the Reeb graph of a~self-indexing Morse function is a tree (even for $n=2$).
\end{proposition}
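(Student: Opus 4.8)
The plan is to obtain a contradiction from a hypothetical cycle in $\reeb{f}$ by means of the observation recorded just before the statement: a cycle forces a vertex of index $n-1$ to lie below a vertex of index $1$. Since $f$ is ordered and $1 < n-1$ whenever $n \geq 3$, critical points of index $1$ must lie strictly below those of index $n-1$, which is the reverse inequality. The only real work is to make that observation precise for the possibly non-simple ordered function $f$ at hand and to read off the indices of the two extremal vertices of the cycle.

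First I would fix the good orientation induced by $\overline{f}$ and suppose $\gamma \subset \reeb{f}$ is a cycle. Since $\overline{f}$ is strictly monotone on each edge, its restriction to $\gamma$ attains its minimum at a vertex $u$ and its maximum at a vertex $w$, necessarily with $u \neq w$ and $\overline{f}(u) < \overline{f}(w)$; moreover both edges of $\gamma$ at $u$ point upward and both edges at $w$ point downward, so that $\degout(u) \geq 2$ and $\degin(w) \geq 2$, and both are critical vertices. Next I would determine their indices. The inequality $\degout(u) \geq 2$ says that the level-set component represented by $u$ breaks into at least two pieces as the value increases past $\overline{f}(u)$. By Morse theory this passage performs surgery whose belt sphere is $\es^{n-k-1}$ for a critical point of index $k$, and such a surgery can raise the number of connected components only when the belt sphere is disconnected, i.e. only for $k=n-1$; hence $\overline{f}(u)$ is attained by a critical point of index $n-1$. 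Dually, $\degin(w) \geq 2$ means that components merge past $\overline{f}(w)$, which needs a disconnected attaching sphere $\es^{k-1}$, i.e. $k=1$, so $\overline{f}(w)$ is an index-$1$ value. Because $f$ is ordered, every critical point on the level of $u$ shares the index $n-1$ and every critical point on the level of $w$ shares the index $1$, whence orderedness gives $\overline{f}(w) < \overline{f}(u)$, contradicting $\overline{f}(u) < \overline{f}(w)$. Therefore $\reeb{f}$ has no cycle and is a tree.

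The self-indexing statement for $n=2$ falls outside this argument, since then $1 = n-1$ and the indices of $u$ and $w$ coincide. For that case I would instead use that a self-indexing function places all index-$1$ critical points on a single level $c_1$; every branch vertex of $\reeb{f}$ then lies at height $c_1$, whereas a cycle would need its lowest and highest vertices to be distinct branch vertices at different heights, which is impossible. I expect the main obstacle to be precisely the non-simple case: the index--degree correspondence of Proposition \ref{proposition:correspondece_degree_index} is stated only for simple Morse functions, so the implications ``$\degout(u)\geq 2 \Rightarrow \ind = n-1$'' and ``$\degin(w)\geq 2 \Rightarrow \ind = 1$'' must be re-derived directly from handle attachment, using that $\es^{n-k-1}$ is disconnected exactly for $k=n-1$ and $\es^{k-1}$ exactly for $k=1$. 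Alternatively, since on an ordered function all critical points of a common level share a single index, I could first separate them to reduce to a simple ordered Morse function without altering the number of cycles of the Reeb graph, and then invoke the correspondence directly.
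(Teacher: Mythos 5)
Your proof is correct, and its core coincides with the paper's: a cycle in $\reeb{f}$ forces a splitting vertex (index $n-1$) to lie strictly below a merging vertex (index $1$), contradicting orderedness when $n\geq 3$. The two arguments differ only in how they cope with $f$ not being simple. The paper first perturbs $f$ to a simple, still ordered Morse function $g$ with the same critical points and indices, quoting \cite[Lemma 3.5 and Remark 3.6]{Michalak} for the non-obvious fact that $\beta_1(\reeb{f})\leq\beta_1(\reeb{g})$, and then applies Proposition \ref{proposition:correspondece_degree_index} verbatim. You instead work with $f$ directly and re-derive the only two implications needed --- a level-set component can split going upward only across an index-$(n-1)$ point (disconnected belt sphere $\es^0$) and can merge only across an index-$1$ point (disconnected attaching sphere $\es^0$) --- which is legitimate because, $f$ being ordered, all critical points on a fixed critical level share one index, so the simultaneous handle attachments can be processed one at a time. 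Your route is more self-contained (it avoids the perturbation lemma, whose real content is that the perturbation cannot destroy cycles) at the cost of reproving part of Reeb's correspondence for non-simple functions; you even record the paper's reduction as an alternative. Your treatment of the self-indexing $n=2$ case (all merging/splitting vertices lie on the single level $c_1$, while a cycle needs two such vertices at distinct heights) is also sound and more explicit than the paper, which defers this case to a remark in \cite{Michalak}.
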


\begin{proof}
	Every critical level of $f$ contains critical points of the same index. Therefore by \cite[Lemma 3.5 and Remark 3.6]{Michalak} performing a small perturbation of $f$ we can obtain a simple and still ordered Morse function $g\colon W \rightarrow \R$ with the same critical points and of the same index as $f$ such that $\beta_1(\reeb{f}) \leq \beta_1(\reeb{g})$.
	
	Suppose that the Reeb graph $\reeb{g}$ has a cycle. From the above proposition there are two vertices of degree $3$ and of index $1$ and $n-1$ which are the highest and the lowest vertices in this cycle, respectively. This is a contradiction, since $g$ is ordered. Hence $\reeb{g}$ is a tree, so $\reeb{f}$ also.
	
	For the case of a self-indexing Morse function on a surface see a comment below \cite[Lemma 3.2]{Michalak}. 
\end{proof}

\begin{lemma}\label{lemma:ordered_function_is_tree_with_connected_preimage}
	For any ordered Morse function  $f\colon W \to \R$ on a smooth triad $(W,W_-,W_+)$, $\dim W \geq 3$, there exist a regular value $c$ such that $W_c$ is connected and that $\partial\left(W^{(-\infty,c)}\right) = W_-$ and $\partial\left( W^{(c,+\infty)}\right) = W_+$. 
\end{lemma}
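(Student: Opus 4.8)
The plan is to choose $c$ to lie strictly between the critical values of index at most $n-2$ and those of index at least $n-1$, and to show that this alone forces $W_c$ to be connected. Since $f$ is ordered, every critical point of index $\le n-2$ lies strictly below every critical point of index $\ge n-1$; writing $M = \max\{f(p) : \ind(p)\le n-2\}$ and $m = \min\{f(p) : \ind(p)\ge n-1\}$ (with the obvious conventions when one of the two sets is empty, using the endpoints of $[a,b]=f(W)$), orderedness gives $M<m$, and the open interval $(M,m)$ contains no critical value. I would take $c$ to be any value of $(M,m)\cap(a,b)$; such $c$ is automatically regular. The two boundary conditions are then immediate from the triad structure: since $f^{-1}(a)=W_-$ and $f^{-1}(b)=W_+$ with $a<c<b$, the manifold $W^{(-\infty,c)}=f^{-1}([a,c))$ is closed at the $a$-end and open at the $c$-end, so its boundary is exactly $W_-$, and symmetrically $\partial W^{(c,+\infty)}=W_+$.

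The heart of the matter is connectivity of $W_c$, and here I would use the Reeb graph. By Proposition \ref{proposition:ordered_has_tree}, $T:=\reeb{f}$ is a tree, and the number of components of $W_c$ equals the number of points of $\overline f^{-1}(c)$ because $c$ is a regular value. After a small perturbation supported away from the level $c$ (as in the proof of Proposition \ref{proposition:ordered_has_tree}) I may assume $f$ is simple without altering $W_c$, so that Proposition \ref{proposition:correspondece_degree_index} applies and in particular $\deg(v)\le 3$ for all $v$. Two facts then follow from the choice of $c$: no vertex of $T$ lying strictly above $c$ has $\degin(v)\ge 2$, and no vertex lying strictly below $c$ has $\degout(v)\ge 2$. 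Indeed, by Proposition \ref{proposition:correspondece_degree_index} a vertex with $\degin(v)=2$ has index $1$ and a vertex with $\degout(v)=2$ has index $n-1$; by orderedness the former lie below $c$ and the latter above $c$.

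These two facts control how $T$ meets the level $c$. I would first show that each connected component of $\overline f^{-1}([a,c])\subset T$ contains exactly one point of $\overline f^{-1}(c)$: if it contained two, the unique path between them in the tree would attain a strict minimum at an interior vertex $w$ of height below $c$ from which two edges go upward, i.e. $\degout(w)\ge 2$, contradicting the second fact (a turning point of the height along a path can only occur at a vertex, since $\overline f$ is strictly monotonic on edges). Passing through the quotient map $q$, this gives a bijection $\pi_0(W_c)\to\pi_0(W^c)$ induced by inclusion, and symmetrically, using the first fact, a bijection $\pi_0(W_c)\to\pi_0\!\left(f^{-1}([c,b])\right)$. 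Writing $W=W^c\cup_{W_c}f^{-1}([c,b])$ and feeding these two bijections into the Mayer--Vietoris sequence for $\pi_0$ (equivalently, into inclusion--exclusion, where the gluing datum is a perfect matching), the components of $W$ end up in bijection with those of $W_c$; since $W$ is connected, $W_c$ is connected. The step I expect to be the main obstacle is the bookkeeping around non-simplicity: one must arrange the perturbation to a simple ordered function so that it leaves the chosen level set untouched, and then verify carefully that the two restriction maps on $\pi_0$ are genuinely bijective rather than merely surjective, which is exactly where the index constraints on the vertices above and below $c$ are used.
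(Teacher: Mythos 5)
Your proof is correct and follows essentially the same route as the paper: pass to a simple ordered function (perturbing only near critical points, so the level $W_c$ is untouched), choose $c$ in the critical-value-free gap between the critical values of index $\le n-2$ and those of index $\ge n-1$, and use Proposition \ref{proposition:correspondece_degree_index} together with orderedness to see that the Reeb graph has no merging (indegree-$2$) vertices above $c$ and no splitting (outdegree-$2$) vertices below $c$. The paper phrases the conclusion more directly --- the portion of the (tree) Reeb graph over this gap is a single interval, so $\overline{f}^{-1}(c)$ is one point --- which makes your Mayer--Vietoris bookkeeping on $\pi_0$ unnecessary, but your version of the argument is sound.
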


\begin{proof}
	We may assume that $f$ is simple by changing it on arbitrary small neighbourhoods of critical points.
	Suppose that $f$ has critical points of indices $1$ and $n-1$. Then by Proposition \ref{proposition:correspondece_degree_index} a subgraph of $\reeb{f}$ between the highest vertex of index $1$ and the lowest vertex of index $n-1$ is homeomorphic to the interval and $c$ can be taken from levels corresponding to this subgraph. If there is no vertex of index $1$ (for $n-1$ we proceed analogously), then $W_-=\varnothing$ and $f$ has one minimum or $W_-\neq \varnothing$ is connected and $f$ does not have any critical point being a minimum. In both the cases $\reeb{f}$ has a unique vertex $v$ with indegree $0$. Thus we may take any regular value through which the edge incident to $v$ passes. 
\end{proof}

\begin{lemma}\label{lemma:number_of_cycles_Delta_3}
	Let $f\colon M \rightarrow \R$ be a simple Morse function on an $n$-dimensional manifold $M$, $n \geq 3$, $k_i$ be the number of critical points of index $i$ and let $\Delta_3$ be the number of vertices of degree 3 in $\reeb{f}$. Then $$\beta_1(\reeb{f}) = -\frac{k_0+k_n}{2} + \frac{\Delta_3}{2} + 1.$$
	Furthermore, if we denote by $\Delta_3^{\operatorname{in}}$ (by  $\Delta_3^{\operatorname{out}}$) the number of vertices in $\reeb{f}$ with indegree $2$ (outdegree $2$), then $$\Delta_3^{\operatorname{in}} - k_0 + 1 = \beta_1(\reeb{f}) = \Delta_3^{\operatorname{out}} - k_n +1$$
\end{lemma}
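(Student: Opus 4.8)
The plan is to reduce everything to vertex-and-edge counting in the oriented graph $\reeb{f}$, using the degree--index dictionary of Proposition \ref{proposition:correspondece_degree_index}. Since $M$ is closed it has no boundary, so every vertex of $\reeb{f}$ corresponds to a critical point of $f$, and by simplicity this correspondence is a bijection. Proposition \ref{proposition:correspondece_degree_index} tells us that the maximal vertex degree is $3$, that a vertex has degree $1$ exactly when its critical point has index $0$ or $n$, and that degree-$3$ vertices occur only among the vertices of index $1$ or $n-1$. Writing $\Delta_1, \Delta_2, \Delta_3$ for the numbers of vertices of degree $1,2,3$, this gives $\Delta_1 = k_0 + k_n$.

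For the first formula I would use the handshake lemma. As every vertex has degree $1$, $2$ or $3$, we have $|V| = \Delta_1 + \Delta_2 + \Delta_3$ and $2|E| = \Delta_1 + 2\Delta_2 + 3\Delta_3$. Substituting into $\beta_1(\reeb{f}) = |E| - |V| + 1$ makes the $\Delta_2$ terms cancel and yields $\beta_1(\reeb{f}) = \tfrac{1}{2}(\Delta_3 - \Delta_1) + 1$. Replacing $\Delta_1 = k_0 + k_n$ gives the stated identity.

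For the second formula the key input is the good orientation. Because the inducing function has no extrema at vertices of degree greater than $1$, every vertex of degree $2$ or $3$ has at least one incoming and one outgoing edge; hence a degree-$2$ vertex satisfies $\degin = \degout = 1$, and a degree-$3$ vertex is of exactly one of the two types in Figure \ref{rys:deg_3}, so $\Delta_3 = \Delta_3^{\operatorname{in}} + \Delta_3^{\operatorname{out}}$. Counting each edge once at its head and once at its tail gives $\sum_v \degin(v) = \sum_v \degout(v) = |E|$, whence $\sum_v(\degin(v) - \degout(v)) = 0$. Evaluating this alternating sum over the vertex types---index $0$ contributes $-1$, index $n$ contributes $+1$, degree-$2$ vertices contribute $0$, and the two kinds of degree-$3$ vertices contribute $+1$ and $-1$---produces the balance relation $\Delta_3^{\operatorname{in}} - \Delta_3^{\operatorname{out}} = k_0 - k_n$. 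Combining this with the first formula and with $\Delta_3 = \Delta_3^{\operatorname{in}} + \Delta_3^{\operatorname{out}}$ yields $\beta_1(\reeb{f}) = \Delta_3^{\operatorname{in}} - k_0 + 1$, and by the symmetric computation with outdegrees, $\beta_1(\reeb{f}) = \Delta_3^{\operatorname{out}} - k_n + 1$.

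The argument is elementary, so there is no serious obstacle; the only points requiring care are the two structural facts feeding the count---that $M$ being closed forces every vertex to arise from a critical point, so that $\Delta_1 = k_0 + k_n$ with no stray boundary vertices, and that the good orientation rules out sources and sinks at vertices of degree $2$ and $3$, so that the in/out-degree bookkeeping is exactly as claimed. Both follow directly from the definitions and from Proposition \ref{proposition:correspondece_degree_index}.
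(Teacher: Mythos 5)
Your proof is correct and follows essentially the same route as the paper, which simply invokes $|V|=\sum_i k_i$, $2|E|=\sum_v\deg(v)$ and $\beta_1=|E|-|V|+1$ for the first identity and a case check of the degree/orientation types for the second; you have merely written out the handshake and in/out-degree balance computations that the paper leaves as ``an easy computation.'' No gaps.
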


\begin{proof} It is an easy computation since $|V| = \sum_{i} k_i$, $2|E| = \sum_{v\in V} \deg(v)$ and $\beta_1(\reeb{f}) = |E|-|V|+1$. The second part is just a careful investigation of graphs with vertices of degrees $1$, $2$ and $3$. 
\end{proof}

%%%%%%%%%%%%%%%%%%%%%%%%%%%%%%%%%%%%%%%%%%%%%%%%%%

%%%%%%%%%%%%%%%%%%%%%%%%%%%%%%%%%%%%%%%%%%%%%%%%%%
\section{Combinatorial modifications of Reeb graphs}\label{section:combinatorial_mod}
%%%%%%%%%%%%%%%%%%%%%%%%%%%%%%%%%%%%%%%%%%%%%%%%%%

\begin{lemma} \label{lemat:ruchy}
	Let $f\colon W \rightarrow \R$ be a simple Morse function with exactly two critical points $p$ and $p'$ on a smooth triad $(W,W_-,W_+)$, where $n = \dim W \geq 3$. Let $f(p) > f(p')$ and assume that $\reeb{f}$ is isomorphic to the graph on the left side of the case $(i)$ in Figure \ref{figure:modifications}. If $\ind(p) \leq \ind(p')$, then there exists a simple Morse function $g\colon W \rightarrow \R$ with the same critical points and of the same index as $f$, such that $g(p) < g(p')$ and $\reeb{g}$ is isomorphic to the graph on the right side of  the case $(i)$ in Figure \ref{figure:modifications} (in the cases (4) and (5) we require the order of the vertices corresponding to the components of $W_\pm = V^\pm_1 \sqcup V^\pm_2 \sqcup V^\pm_3$ determined by a permutation $\sigma\colon \{1,2,3\} \to \{1,2,3\}$).
\end{lemma}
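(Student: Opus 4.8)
The heart of the argument is Milnor's rearrangement of critical points \cite{Milnor}, applied to the triad $(W,W_-,W_+)$ carrying the two critical points $p$ and $p'$. The plan is to interchange the heights of $p$ and $p'$ while keeping both critical points and their indices fixed, and then to read off $\reeb{g}$ from the way the level sets reconnect.

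First I would fix a gradient-like vector field $\xi$ for $f$ and pick a regular value $c$ with $f(p') < c < f(p)$, so that $W_c = f^{-1}(c)$ is an $(n-1)$-dimensional level set lying strictly between the two critical points. On $W_c$ the trajectories of $\xi$ descending from $p$ trace out a sphere of dimension $\ind(p)-1$, while the trajectories of $\xi$ ascending into $p'$ trace out a sphere of dimension $n-1-\ind(p')$; a trajectory of $\xi$ joining $p$ to $p'$ exists precisely when these two spheres meet in $W_c$. Since $\dim W_c = n-1$ and the hypothesis $\ind(p)\le\ind(p')$ yields $(\ind(p)-1)+(n-1-\ind(p')) \le n-2 < n-1$, a generic perturbation of $\xi$, constant near $\partial W$, makes the two spheres disjoint. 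Thus we may assume that no trajectory of $\xi$ runs from $p$ to $p'$.

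With this non-interference in place the two critical points become independent, and the Rearrangement Theorem \cite{Milnor} supplies a Morse function $g\colon W \to \R$ having the same critical points, the same gradient-like vector field and the same indices as $f$, agreeing with $f$ near $W_-\sqcup W_+$, and with any prescribed pair of distinct critical values; in particular we may demand $g(p) < g(p')$. Since $g$ retains exactly the two critical points $p$ and $p'$, now on distinct levels, it is again simple.

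It remains to identify $\reeb{g}$ with the right-hand graph of case $(i)$. As $g$ coincides with $f$ near the boundary and admits no trajectory between $p$ and $p'$, crossing the level $g(p)$ and then $g(p')$ attaches the handles of $p$ and $p'$ in the reversed order without interaction, and Proposition \ref{proposition:correspondece_degree_index} fixes the degree and orientation of the two corresponding vertices from their indices. I would then trace, case by case, how the connected components of the level sets split or merge across the two critical levels, which determines the remaining edges and their orientations, and match the outcome with Figure \ref{figure:modifications}. The step I expect to demand the most care is cases $(4)$ and $(5)$, where $W_\pm = V^\pm_1 \sqcup V^\pm_2 \sqcup V^\pm_3$ has three components on each side: here one must verify not only that the homeomorphism type is correct but that the induced identification of the boundary vertices realizes the prescribed permutation $\sigma$, which is arranged through the freedom in the disjoining perturbation of $\xi$, namely the choice of along which components the descending and ascending trajectories of the two handles run.
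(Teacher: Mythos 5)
Your opening moves coincide with the paper's: the paper also cites Milnor's Theorems 4.1 and 4.4, i.e.\ exactly the combination of making the descending sphere of $p$ and the ascending sphere of $p'$ disjoint in an intermediate level $W_c$ via the dimension count $(\ind(p)-1)+(n-1-\ind(p'))\le n-2<\dim W_c$, followed by rearrangement of the critical values; and the identification of $\reeb{g}$ by tracking components of level sets together with Proposition \ref{proposition:correspondece_degree_index} is how the paper handles every case with $\ind(p)<\ind(p')$.

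The gap is in your final sentence, precisely where you anticipated difficulty: the permutation $\sigma$ in cases (4) and (5) cannot be obtained from ``the freedom in the disjoining perturbation of $\xi$''. That perturbation is an isotopy of the intermediate level set $W_c$, so it can never move a foot of the descending sphere of $p$ out of its connected component of $W_c$. Concretely, in case (4) the level $W_c$ has two components, one lying over $V_1^-$ and one over $V_2^-\cup V_3^-$; connectedness of $W_+$ forces the descending $\es^0$ of $p$ to have one foot in each, and the foot in the component over $V_1^-$ descends into $V_1^-$ for \emph{every} gradient-like field for $f$. Hence rearrangement alone only realizes the permutations with $\sigma(1)\in\{2,3\}$, never $\sigma(1)=1$, whereas the lemma (and its later uses, e.g.\ in Lemma \ref{lemma:modification_to_one_minimum_maximum} and Step 3 of the proof of Theorem \ref{thm:realization}) requires arbitrary $\sigma$. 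This is why the paper runs a separate argument whenever $k=m\in\{1,n-1\}$: it passes to the handle decomposition, isotopes the two $1$-handles apart so that both are attached to $W_-\times\{1\}$ along $\de^{n-1}\sqcup\de^{n-1}$, and then re-chooses these attaching embeddings arbitrarily, invoking homogeneity of connected manifolds (and, implicitly, the independence of the resulting boundary connected sum from the order and pairing of components) to see that the new attachments still produce the triad $(W,W_-,W_+)$. That re-attachment step, which discards the original gradient-like field altogether rather than perturbing it, is the missing ingredient in your proposal.
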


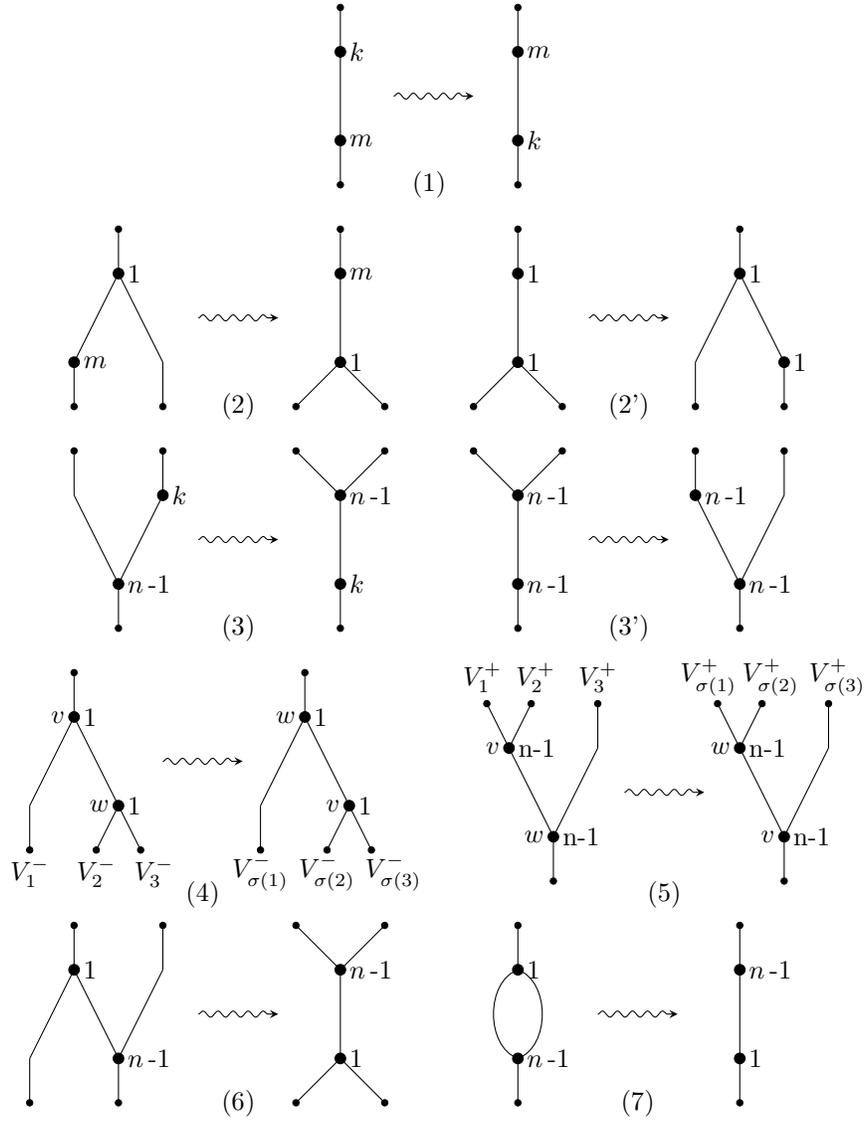
\begin{figure}[h]
	\centering
	
	\begin{tikzpicture}[scale=1]

		%%%%%%    1
		
		\draw (-1,0.5) -- (-1,2.5);
		\filldraw (-1,2) circle (1.7pt)   node[align=left, right] {$k$};
		\filldraw (-1,1) circle (1.7pt)   node[align=left, right] {$m$};
		
		\draw (1,0.5) -- (1,2.5);
		\filldraw (1,2) circle (1.7pt)   node[align=left, right] {$m$};
		\filldraw (1,1) circle (1.7pt)   node[align=left, right] {$k$};
		
		\draw [->, looseness=2, snake it ] (-0.4,1.5) -- (0.5,1.5);
		\draw (0,0.5) node{(1)};
		
		\filldraw (-1,0.5) circle (1pt);
		\filldraw (-1,2.5) circle (1pt);
		\filldraw (1,0.5) circle (1pt);
		\filldraw (1,2.5) circle (1pt);

		%%%%%%    2
		
		\draw (-3.5,0) -- (-3.5,-0.5);
		\draw (-3,-1.5) -- (-3.5,-0.5);
		\draw (-4,-1.5) -- (-3.5,-0.5);
		\draw (-4,-1.5) -- (-4,-2);
		\draw (-3,-1.5) -- (-3,-2);
		\filldraw (-3.5,-0.5) circle (1.7pt)   node[align=left, right] {1};
		\filldraw (-4,-1.5) circle (1.7pt)   node[align=left, right] {$m$};
		
		\draw (-1,0) -- (-1,-1.5);
		\draw (-0.5,-2) -- (-1,-1.5);
		\draw (-1.5,-2) -- (-1,-1.5);
		\filldraw (-1,-0.5) circle (1.7pt)   node[align=left, right] {$m$};
		\filldraw (-1,-1.5) circle (1.7pt)   node[align=left, right] {1};
		
		\draw [->, looseness=2, snake it ] (-2.6,-1) -- (-1.7,-1);
		\draw (-2.15,-2) node{(2)};

		\filldraw (-3.5,0) circle (1pt);
		\filldraw (-3,-2) circle (1pt);
		\filldraw (-4,-2) circle (1pt);
		
		\filldraw (-1,0) circle (1pt);
		\filldraw (-0.5,-2) circle (1pt);
		\filldraw (-1.5,-2) circle (1pt);

		%%%%%%    2'
		
		\draw (3.5,0) -- (3.5,-0.5);
		\draw (3,-1.5) -- (3.5,-0.5);
		\draw (4,-1.5) -- (3.5,-0.5);
		\draw (4,-1.5) -- (4,-2);
		\draw (3,-1.5) -- (3,-2);
		\filldraw (3.5,-0.5) circle (1.7pt)   node[align=left, right] {1};
		\filldraw (4,-1.5) circle (1.7pt)   node[align=left, right] {1};
		
		\draw (1,0) -- (1,-1.5);
		\draw (0.5,-2) -- (1,-1.5);
		\draw (1.5,-2) -- (1,-1.5);
		\filldraw (1,-0.5) circle (1.7pt)   node[align=left, right] {1};
		\filldraw (1,-1.5) circle (1.7pt)   node[align=left, right] {1};
		
		\draw [->, looseness=2, snake it ] (1.8,-1) -- (2.7,-1);
		\draw (2.25,-2) node{(2')};

		\filldraw (3.5,0) circle (1pt);
		\filldraw (3,-2) circle (1pt);
		\filldraw (4,-2) circle (1pt);
		
		\filldraw (1,0) circle (1pt);
		\filldraw (0.5,-2) circle (1pt);
		\filldraw (1.5,-2) circle (1pt);
		
		%%%%%%    3
		
		\draw (-3.5,-4) -- (-3.5,-4.5);
		\draw (-3,-3) -- (-3.5,-4);
		\draw (-4,-3) -- (-3.5,-4);
		\draw (-4,-2.5) -- (-4,-3);
		\draw (-3,-2.5) -- (-3,-3);
		\filldraw (-3,-3) circle (1.7pt)   node[align=left, right] {$k$};
		\filldraw (-3.5,-4) circle (1.7pt)   node[align=left, right] {$n\,$-1};

		\draw (-1,-3) -- (-1,-4.5);		
		\draw (-0.5,-2.5) -- (-1,-3);
		\draw (-1.5,-2.5) -- (-1,-3);
		\filldraw (-1,-3) circle (1.7pt)   node[align=left, right] {$n\,$-1};
		\filldraw (-1,-4) circle (1.7pt)   node[align=left, right] {$k$};
		
		\draw [->, looseness=2, snake it ] (-2.6,-3.5) -- (-1.7,-3.5);
		\draw (-2.15,-4.5) node{(3)};

		\filldraw (-3.5,-4.5) circle (1pt);
		\filldraw (-3,-2.5) circle (1pt);
		\filldraw (-4,-2.5) circle (1pt);
		
		\filldraw (-1,-4.5) circle (1pt);
		\filldraw (-0.5,-2.5) circle (1pt);
		\filldraw (-1.5,-2.5) circle (1pt);
		
		%%%%%%    3'
		
		\draw (3.5,-4) -- (3.5,-4.5);
		\draw (3,-3) -- (3.5,-4);
		\draw (4,-3) -- (3.5,-4);
		\draw (4,-2.5) -- (4,-3);
		\draw (3,-2.5) -- (3,-3);
		\filldraw (3,-3) circle (1.7pt)   node[align=left, right] {$n\,$-1};
		\filldraw (3.5,-4) circle (1.7pt)   node[align=left, right] {$n\,$-1};
		
		\draw (1,-3) -- (1,-4.5);
		
		\draw (0.5,-2.5) -- (1,-3);
		\draw (1.5,-2.5) -- (1,-3);
		\filldraw (1,-3) circle (1.7pt)   node[align=left, right] {$n\,$-1};
		\filldraw (1,-4) circle (1.7pt)   node[align=left, right] {$n\,$-1};
		
		\draw [->, looseness=2, snake it ] (1.8,-3.5) -- (2.7,-3.5);
		\draw (2.25,-4.5) node{(3')};

		\filldraw (3.5,-4.5) circle (1pt);
		\filldraw (3,-2.5) circle (1pt);
		\filldraw (4,-2.5) circle (1pt);
		
		\filldraw (1,-4.5) circle (1pt);
		\filldraw (0.5,-2.5) circle (1pt);
		\filldraw (1.5,-2.5) circle (1pt);

		%%%%%%    4
		
		\draw (-3.5,-6.5) -- (-4,-5.5);
		\draw (-4,-5) -- (-4,-5.5);
		\draw (-3.5,-6.5) -- (-3.25,-7);
		\draw (-3.5,-6.5) -- (-3.75,-7);
		\draw (-4.5,-6.5) -- (-4,-5.5);
		\draw (-4.5,-6.5) -- (-4.5,-7);
		
		\draw (-3.1,-7.25) node{$V^-_3$};
		\draw (-3.75,-7.25) node{$V^-_2$};
		\draw (-4.5,-7.25) node{$V^-_1$};
		
		\filldraw (-4,-5.5) circle (1.7pt)   node[align=left, right] {1} node[align=right, left] {$v$};
		\filldraw (-3.5,-6.5) circle (1.7pt)   node[align=left, right] {1} node[align=right, left]{$w$};

		\draw (-1.4,-5) -- (-1.4,-5.5);
		\draw (-0.9,-6.5) -- (-1.4,-5.5);
		\draw (-0.9,-6.5) -- (-0.65,-7);
		\draw (-0.9,-6.5) -- (-1.15,-7);
		\draw (-1.9,-6.5) -- (-1.4,-5.5);
		\draw (-1.9,-6.5) -- (-1.9,-7);
		
		\draw (-0.4,-7.25) node{$V^-_{\sigma(3)}$};
		\draw (-1.15,-7.25) node{$V^-_{\sigma(2)}$};
		\draw (-1.9,-7.25) node{$V^-_{\sigma(1)}$};
		
		\filldraw (-1.4,-5.5) circle (1.7pt)   node[align=left, right] {1} node[align=right, left] {$w$};
		\filldraw (-0.9,-6.5) circle (1.7pt)   node[align=left, right] {1} node[align=right, left]{$v$};

		\draw [->, looseness=2, snake it ] (-3.0,-6) -- (-2.1,-6);
		\draw (-2.55,-7.5) node{(4)};

		\filldraw (-4,-5) circle (1pt);
		\filldraw (-3.25,-7) circle (1pt);
		\filldraw (-3.75,-7) circle (1pt);
		\filldraw (-4.5,-7) circle (1pt);

		\filldraw (-1.4,-5) circle (1pt);
		\filldraw (-0.65,-7) circle (1pt);
		\filldraw (-1.15,-7) circle (1pt);
		\filldraw (-1.9,-7) circle (1pt);
		
		%%%%%%    5
		
		\draw (3.5,-5.85) -- (4,-6.85);
		\draw (4,-7.35) -- (4,-6.85);
		\draw (3.5,-5.85) -- (3.25,-5.35);
		\draw (3.5,-5.85) -- (3.75,-5.35);
		\draw (4.5,-5.85) -- (4,-6.85);
		\draw (4.5,-5.85) -- (4.5,-5.35);
		
		\draw (3.15,-5.05) node{$V^+_{\sigma(1)}$};
		\draw (3.85,-5.05) node{$V^+_{\sigma(2)}$};
		\draw (4.6,-5.05) node{$V^+_{\sigma(3)}$};
		
		\filldraw (4,-6.85) circle (1.7pt)   node[align=left, right] {n-1} node[align=right, left] {$v$};
		\filldraw (3.5,-5.85) circle (1.7pt)   node[align=left, right] {n-1} node[align=right, left]{$w$};

		\draw (1.4,-7.35) -- (1.4,-6.85);
		
		\draw (0.9,-5.85) -- (1.4,-6.85);
		\draw (0.9,-5.85) -- (0.65,-5.35);
		\draw (0.9,-5.85) -- (1.15,-5.35);
		\draw (1.9,-5.85) -- (1.4,-6.85);
		\draw (1.9,-5.85) -- (1.9,-5.35);

		\draw (0.6,-5.05) node{$V^+_1$};
		\draw (1.2,-5.05) node{$V^+_2$};
		\draw (1.9,-5.05) node{$V^+_3$};
		
		\filldraw (1.4,-6.85) circle (1.7pt)   node[align=left, right] {n-1} node[align=right, left] {$w$};
		\filldraw (0.9,-5.85) circle (1.7pt)   node[align=left, right] {n-1} node[align=right, left]{$v$};

		\draw [->, looseness=2, snake it ] (2.2,-6.35) -- (3.1,-6.35);
		\draw (2.65,-7.5) node{(5)};

		\filldraw (4,-7.35) circle (1pt);
		\filldraw (3.25,-5.35) circle (1pt);
		\filldraw (3.75,-5.35) circle (1pt);
		\filldraw (4.5,-5.35) circle (1pt);

		\filldraw (1.4,-7.35) circle (1pt);
		\filldraw (0.65,-5.35) circle (1pt);
		\filldraw (1.15,-5.35) circle (1pt);
		\filldraw (1.9,-5.35) circle (1pt);

		%%%%%%    6
		
		\draw (-3.5,-9.35) -- (-3.5,-9.85);
		\draw (-3,-8.35) -- (-3.5,-9.35);
		\draw (-4,-8.35) -- (-3.5,-9.35);
		\draw (-4,-8.35) -- (-4.5,-9.35);
		\draw (-4.5,-9.85) -- (-4.5,-9.35);
		
		\draw (-4,-7.85) -- (-4,-8.35);
		\draw (-3,-7.85) -- (-3,-8.35);
		\filldraw (-4,-8.35) circle (1.7pt)   node[align=left, right] {1};
		\filldraw (-3.5,-9.35) circle (1.7pt)   node[align=left, right] {$n\,$-1};

		\draw (-1,-8.35) -- (-1,-9.35);
		\draw (-1.5,-9.85) -- (-1,-9.35);
		\draw (-0.5,-9.85) -- (-1,-9.35);
		\draw (-0.5,-7.85) -- (-1,-8.35);
		\draw (-1.5,-7.85) -- (-1,-8.35);
		\filldraw (-1,-8.35) circle (1.7pt)   node[align=left, right] {$n\,$-1};
		\filldraw (-1,-9.35) circle (1.7pt)   node[align=left, right] {1};
		
		\draw [->, looseness=2, snake it ] (-2.6,-8.85) -- (-1.7,-8.85);
		\draw (-2.15,-9.85) node{(6)};

		\filldraw (-4,-7.85) circle (1pt);
		\filldraw (-3,-7.85) circle (1pt);
		\filldraw (-4.5,-9.85) circle (1pt);
		\filldraw (-3.5,-9.85) circle (1pt);

		\filldraw (-0.5,-7.85) circle (1pt);
		\filldraw (-1.5,-7.85) circle (1pt);
		\filldraw (-0.5,-9.85) circle (1pt);
		\filldraw (-1.5,-9.85) circle (1pt);

		%%%%%%    7

		\draw (3.5,-7.85) -- (3.5,-9.85);
		
		\filldraw (3.5,-8.35) circle (1.7pt)   node[align=left, right] {$n\,$-1};
		\filldraw (3.5,-9.35) circle (1.7pt)   node[align=left, right] {1};

		\draw (1,-9.35) -- (1,-9.85);
		\draw (1,-8.35) -- (1,-7.85);
		
		\draw (1,-8.35) to[out=200,in=160] (1,-9.35);
		\draw (1,-8.35) to[out=340,in=20] (1,-9.35);
		
		\filldraw (1,-8.35) circle (1.7pt)   node[align=left, right] {1};
		\filldraw (1,-9.35) circle (1.7pt)   node[align=left, right] {$n\,$-1};
		
		\draw [->, looseness=2, snake it ] (1.9,-8.85) -- (2.8,-8.85);
		\draw (2.35,-9.85) node{(7)};

		\filldraw (1,-7.85) circle (1pt);
		\filldraw (1,-9.85) circle (1pt);

		\filldraw (3.5,-7.85) circle (1pt);
		\filldraw (3.5,-9.85) circle (1pt);
		
		%%%%%%%%%%%%%
	\end{tikzpicture}
	
	\caption{Combinatorial modifications of Reeb graphs related to change of the order of two consecutive critical points. On the right sides of vertices are their indices, where $1 \leq k \leq m \leq n-1$.
	}\label{figure:modifications}
\end{figure}

The above lemma provides a technique of \textbf{combinatorial modifications of Reeb graphs} by modifications of simple Morse functions. To be more precise, let $f\colon M \rightarrow \R$ be a simple Morse function on a manifold $M$, $v$ and $w$ be  adjacent vertices of $\reeb{f}$ and let $p$ and $p'$ be the critical points of $f$ corresponding to $v$ and~$w$, respectively, such that $\ind(p) \leq \ind(p')$ and $f(p) > f(p')$. Assume that~$W$, the connected component of $M^{\left[f(p')-\varepsilon, f(p)+\varepsilon\right]}$ containing $p$ and $p'$, contains no other critical points of $f$. Then $f$ can be modified on $W$ to a simple Morse function $g$ such that the Reeb graphs $\reeb{f|_W}$ and $\reeb{g|_W}$ are isomorphic to the graphs on the left and on the right side of a~suitable case in Figure \ref{figure:modifications}, respectively.

In fact, except the case (6), if vertices $v$ and $w$ are adjacent, we can always assume that $p$ and $p'$ are two consecutive critical points by rescaling $f$ on the triad corresponding to a small neighbourhood of the edge joining the vertices.

It is easily seen that for vertices $v$ and $w$ of degree $2$ or $3$ there are no cases other than those presented in Figure~\ref{figure:modifications}.

\begin{proof}
	By \cite[Theorems 4.1. and 4.4]{Milnor} there exists a simple Morse function $g\colon W\to \R$ on $(W,W_-,W_+)$ such that $g(p) < g(p')$ and with the same critical points and indices as $f$. We need only to show changes in the Reeb graph.	Let $q\colon W \to \reeb{g}$ be the quotient map, $k:=\ind(p) \leq \ind(p') =: m$ and let $v := q(p)$, $w:=q(p')$ be the vertices of $\reeb{g}$. The main properties, that we need, are connectedness of $\reeb{g}$, the number of connected components of $W_\pm$ and the correspondence from Proposition \ref{proposition:correspondece_degree_index}.
	
	Case $(1)$. Here $W_-$ and $W_+$ are connected. Since $W_{g(p)-\varepsilon}$ has the same number of connected components as $W_-$, $\degin(v)=1$. Similarly, $\degout(w)=1$. If $\degout(v)=2$, there would be $\degin(w)=2$, so $k=n-1$ and $m=1$, a~contradiction. Therefore $\degout(v)=1=\degin(w)$ and so $\reeb{g}$ is isomorphic to the graph on the right side of $(1)$.
	
	Case $(2)$. Assume that $m \neq 1$. For the same reason as above $\degin(v)=2$, and therefore $\degout(v)=1$. Thus $\degin(w)=\degout(w)=1$ and $\reeb{g}$ is as desired.
	In the same way we show the other cases when $k\neq m$. 
	
	An additional argument is needed when $k=m \in \{1,n-1\}$. Suppose that $k=m=1$ (for $n-1$ the proof is by duality). Consider a handle decomposition of $W$ corresponding to $f$. It consists of two $1$-handles attached by embeddings of $\es^0 \times \de^{n-1} = \de^{n-1} \sqcup \de^{n-1}$. By handle theory we can isotopically separate the images of these embeddings. Therefore the handles are attached to $W_- \times [0,1]$ along embeddings $\de^{n-1} \sqcup \de^{n-1} \to W_-\times \{1\}$. Since we can attach the handles in any order and because closed connected manifolds are homogeneous (i.e. for any two disjoint copies of $\de^{n-1}$ there is a self-diffeomorphism isotopic to the identity mapping one disc to the other), we can change the embeddings arbitrarily. Thus the handles can be attached to a required components of~$W_- \times \{1\}$. 
\end{proof}

\begin{lemma}\label{lemma:product_triad-cancellation_of_handles}
	If $(W,W_-,W_+)$ is a smooth triad with simple Morse function with exactly two critical points of index $0$ and $1$ ($n$ and $n-1$), then $(W,W_-,W_+)$ is a product triad, i.e. $W\cong W_- \times [0,1]$. Thus it admits a Morse function without critical points. 
\end{lemma}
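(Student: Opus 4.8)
The plan is to reduce to a single case by duality and then cancel the handle pair. First I would observe that the substitution $f \mapsto -f$ (after rescaling the target back to an interval) interchanges the roles of $W_-$ and $W_+$ and turns a critical point of index $i$ into one of index $n-i$; this converts the case of indices $n$ and $n-1$ into the case of indices $0$ and $1$. Hence it suffices to assume that $f$ has exactly one critical point $p_0$ of index $0$ and one critical point $p_1$ of index $1$, with $f(p_0) < f(p_1)$, so that the minimum lies below the saddle.

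Next I would pass to the handle decomposition associated with $f$. Choosing a regular value $c$ with $f(p_0) < c < f(p_1)$, Morse theory identifies $W^{(-\infty,c]}$ with the disjoint union of a collar $W_- \times [0,1]$ and a single $0$-handle $\de^n$; the belt sphere of this $0$-handle appears on the level $W_c$ as a whole component $\es^{n-1}$. Passing $p_1$ then attaches a $1$-handle $\de^1 \times \de^{n-1}$ along its attaching sphere $\es^0$ (two points) embedded in $W_c$. The decisive point is to locate these two feet. Since $W$ is connected, the $1$-handle must join the disk component created by $p_0$ to the remaining part of $W^{(-\infty,c]}$, which is nonempty since it contains the collar of $W_-$; therefore exactly one foot of $\es^0$ lies on the belt sphere $\es^{n-1}$ and the other lies on a different component of $W_c$. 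Consequently the attaching sphere of the $1$-handle meets the belt sphere of the $0$-handle transversally in a single point.

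This single-point transversality is exactly the hypothesis of the cancellation theorem of Morse theory (see \cite[Theorem~5.4]{Milnor}); because the lower index is $0$, the intersection number is realized geometrically, so no Whitney trick, and hence no restriction on $n$, is needed. Applying it, the two critical points are eliminated and one obtains a Morse function on $(W,W_-,W_+)$ with no critical points; integrating a gradient-like vector field for it yields a diffeomorphism $W \cong W_- \times [a,b]$ carrying the function to the projection, so $(W,W_-,W_+)$ is a product triad and that projection is the desired critical-point-free Morse function. (Alternatively one can see this by hand: the $0$-handle together with the joining $1$-handle is a collar extension, so absorbing it leaves a manifold diffeomorphic to $W_- \times [a,b]$.) The main obstacle is the middle step --- showing that the two feet of the $1$-handle straddle the belt sphere so that the geometric intersection number is exactly one. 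This is precisely where connectedness of $W$ is used, together with the fact that $W_- \neq \varnothing$ in the situations at hand, so that the saddle genuinely merges two components rather than attaching within the single disk; once this is in place, everything that follows is a direct appeal to the cancellation theorem.
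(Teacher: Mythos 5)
Your proof is correct and takes essentially the same route as the paper, whose entire proof is a citation of Milnor's First Cancellation Theorem \cite[Theorem 5.4]{Milnor}; you additionally verify the hypothesis of that theorem (the attaching sphere of the $1$-handle meets the belt sphere of the $0$-handle transversally in one point, forced by connectedness of $W$ and $W_-\neq\varnothing$), which the paper leaves implicit. No further comment is needed.
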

\begin{proof}
	This is a special case of Cancellation of Handles \cite[Theorem 5.4]{Milnor}. 
\end{proof}

The above lemma gives us two additional modifications of Reeb graphs presented in Figure \ref{figure:modifications_product_triad}. In fact, they work in both ways. We can always assume that the critical points corresponding to vertices in (8) or (9) are consecutive by rescalling the function.

\begin{figure}[H]
	\centering
	
	\begin{tikzpicture}[scale=1.2]

		%%%%%%%%%%%%%%		8	
		
		\draw (-3,2) -- (-3,1.5);
		\draw (-3.5,0.5) -- (-3,1.5);
		\draw (-2.5,0.5) -- (-3,1.5);
		\draw (-2.5,0.5) -- (-2.5,0);

		\draw (-0.5,2) -- (-0.5,0);
		
		\filldraw (-3.5,0.5) circle (1.7pt)   node[align=left, right] {0};
		\filldraw (-3,1.5) circle (1.7pt)   node[align=left, right] {1};
		
		\filldraw (-3,2) circle (1pt);
		\filldraw (-2.5,0) circle (1pt);
		
		\filldraw (-0.5,2) circle (1pt);
		\filldraw (-0.5,0) circle (1pt);

		\draw [->, looseness=2, snake it ] (-2,1.1) -- (-1,1.1);
		\draw [->, looseness=2, snake it ] (-1,0.9) -- (-2,0.9);
		\draw (-1.5,0) node{(8)};

		%%%%%%%%%%%%%%		9	

		\draw (3,0) -- (3,0.5);
		\draw (3.5,1.5) -- (3,0.5);
		\draw (2.5,1.5) -- (3,0.5);
		\draw (3.5,1.5) -- (3.5,2);

		\draw (5.5,2) -- (5.5,0);
		
		\filldraw (2.5,1.5) circle (1.7pt)   node[align=left, right] {$n$};
		\filldraw (3,0.5) circle (1.7pt)   node[align=left, right] {$n-1$};
		
		\filldraw (3,0) circle (1pt);
		\filldraw (3.5,2) circle (1pt);
		
		\filldraw (5.5,2) circle (1pt);
		\filldraw (5.5,0) circle (1pt);

		\draw [->, looseness=2, snake it ] (4,1.1) -- (5,1.1);
		\draw [->, looseness=2, snake it ] (5,0.9) -- (4,0.9);
		\draw (4.5,0) node{(9)};

	\end{tikzpicture}
	\caption{Combinatorial modifications of Reeb graphs related to Cancellation of Handles.
	}\label{figure:modifications_product_triad}
\end{figure}
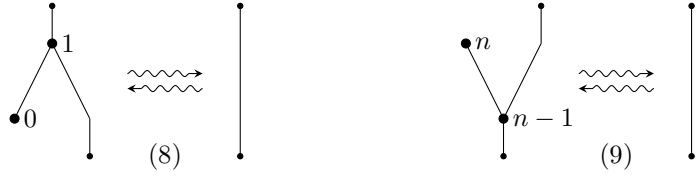

\begin{remark}
	Note that similar operations were introduced by Di Fabio and Landi~\cite{Fabio-Landi} for Reeb graphs of simple Morse functions on closed orientable surfaces. They called them elementary deformations. For manifolds of dimension at least three indices of critical points play important role in the modifications (1) -- (7) which can be used only if the index of the upper vertex is not greater than the index of the lower vertex. This causes that many of them do not work the other way around, contrary to the case of surfaces and elementary deformations, where the index of a critical point not being extremum is always equal to $1$. Another difference between combinatorial modifications and elementary deformations is in the occurrence of vertices of degree $2$ for the former operations. Reeb graphs of simple Morse functions on orientable surfaces have only vertices of degrees $1$ and $3$ (see \cite[Proposition 3.3.]{Michalak}). Their cycle rank is always equal to the genus of a surface, so the modification (7) also does not occur for them.

	Di Fabio and Landi used the elementary deformations to define an edit distance between Reeb graphs. For this purpose they added labels of vertices of Reeb graph with function values.
\end{remark}

Let $\Gamma$ be a~graph with good orientation. A vertex $v$ of degree 3 and of index 1 (respectively of index $n-1$) in $\Gamma$ is \textbf{branching}, if there exist two decreasing (respectively increasing) paths $\gamma, \delta\colon [0,1] \rightarrow \Gamma$  such that $\gamma(0) = \delta(0) = v$, their images are disjoint outside $v$ and both  $\gamma(1)$ and $\delta(1)$ are vertices of degree $1$ in~$\Gamma$.

\begin{remark}\label{remark:preserving_branching}
	Note that the modifcations we defined, except $(7)$, do not change the cycle rank of Reeb graph. One can see it directly or by Lemma \ref{lemma:number_of_cycles_Delta_3}. Moreover, except $(4)$, $(5)$, $(8)$ and $(9)$ they also do not change the property of being branching for a vertex of degree $3$. It is easy to check that the same is true for $(4)$ and $(5)$ if one of the vertices is branching and the other is not.
\end{remark}

For simplicity, a vertex of degree $1$ is called a \textbf{minimum} (\textbf{maximum}) if it has an outgoing edge (incoming edge).

\begin{lemma}\label{lemma:modification_to_one_minimum_maximum}
	Each simple Morse function $f\colon M \rightarrow \R$ on a closed $n$-manifold $M$, $n\geq 3$, can be modified using a finite number of combinatorial modifications to a~simple Morse function with exactly one minimum and maximum and with the same cycle rank of the Reeb graph.
\end{lemma}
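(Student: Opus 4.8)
The plan is to reduce the number of minima and maxima to one each by repeatedly applying the combinatorial modifications from Figures \ref{figure:modifications} and \ref{figure:modifications_product_triad}. The key observation is that a minimum corresponds to a critical point of index $0$ and a maximum to a critical point of index $n$; by Proposition \ref{proposition:correspondece_degree_index} these are exactly the vertices of degree $1$. If there is more than one minimum, I would like to pair one minimum with a vertex of index $1$ and cancel them, thereby decreasing the count. The natural tool is modification $(8)$ (read in the handle-cancellation direction), which removes a minimum together with an adjacent index-$1$ vertex, and dually modification $(9)$ for maxima. The whole difficulty is to arrange the Reeb graph so that such a cancelling pair is actually available and that performing the cancellation does not destroy cycles or increase the number of extrema elsewhere.

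First I would treat the minima; maxima follow by the duality $\ind \leftrightarrow n-\ind$ (reflecting the function $f \mapsto -f$), which swaps the roles of $(8)$ and $(9)$ and of cases $(2)$--$(3)$ and their primed versions. Suppose $\reeb{f}$ has at least two minima, i.e. $k_0 \geq 2$. By Lemma \ref{lemma:number_of_cycles_Delta_3} we have $\Delta_3^{\operatorname{in}} = \beta_1(\reeb{f}) + k_0 - 1 \geq k_0 - 1 \geq 1$, so there is at least one vertex of index $1$ (indegree $2$) available. The strategy is to pick a minimum $u$ and push it upward along an increasing path, using modifications $(1)$, $(2')$, $(3)$ to commute it past vertices of higher or equal index, until it sits directly below a vertex $v$ of index $1$; then I would apply modification $(8)$ to cancel the pair, reducing $k_0$ by one while leaving $\beta_1(\reeb{f})$ unchanged (modification $(8)$ does not change the number of cycles, as do all modifications except $(7)$, by Remark \ref{remark:preserving_branching}). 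Repeating this lowers $k_0$ to $1$; the dual procedure then lowers $k_n$ to $1$.

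The main obstacle is guaranteeing that a minimum can always be moved into a position where it cancels against an index-$1$ vertex without first colliding with another index-$1$ vertex in a way that would consume a cycle, and without the path being blocked. The safest way to control this is to pass through an auxiliary normal form. I would first use Lemma \ref{lemma:modification_to_one_minimum_maximum}'s predecessors — namely the commutation moves $(1)$--$(3)$ — to make $f$ \emph{ordered} below the lowest index-$1$ level, so that all minima lie at the bottom and each is joined by an edge to a vertex that is either another minimum's merge point or an index-$1$ vertex. Concretely, given two minima, I can slide one upward until the two merge at a common index-$1$ vertex $v$ (so $v$ has indegree $2$, both edges coming from minima); this is precisely the left side of $(8)$ read off after a rescaling making the minimum and $v$ consecutive. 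Applying $(8)$ then cancels one minimum against $v$. The only thing to check is that such a configuration — a minimum adjacent to an index-$1$ vertex with no intervening critical point — can be produced; this follows because the component $W$ of the relevant sublevel set containing the two critical points contains no others after rescaling, exactly the hypothesis needed to invoke Lemma \ref{lemat:ruchy} and Lemma \ref{lemma:product_triad-cancellation_of_handles}.

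Since each application of $(8)$ strictly decreases $k_0$ and never increases it, and every commutation move preserves $k_0$, the process terminates with $k_0 = 1$; the dual process with $(9)$ terminates with $k_n = 1$. Throughout, $\beta_1(\reeb{f})$ is preserved because none of the moves $(1)$--$(3')$, $(8)$, $(9)$ changes the number of cycles (Remark \ref{remark:preserving_branching}). As each move is realized by a change of simple Morse function with the same manifold $M$ (Lemmas \ref{lemat:ruchy} and \ref{lemma:product_triad-cancellation_of_handles}), the final function is a simple Morse function on $M$ with exactly one minimum, one maximum, and the same number of cycles, as required.
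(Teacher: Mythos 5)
There is a genuine gap, and it lies exactly where you located ``the main obstacle'': your mechanism for bringing a minimum next to an index-$1$ vertex is to slide the minimum \emph{upward} past intervening critical points, and this move is not available. None of the modifications $(1)$--$(7)$ of Figure \ref{figure:modifications} involves a vertex of index $0$ or $n$; they all concern indices $1\leq k\leq m\leq n-1$. More fundamentally, Lemma \ref{lemat:ruchy} (i.e.\ Milnor's rearrangement theorem) only lets you lower the critical point of \emph{smaller} index below one of larger index (the hypothesis is $f(p)>f(p')$ and $\ind(p)\leq\ind(p')$). A minimum has the smallest possible index, so it can never be raised past an adjacent critical point: the handles corresponding to the vertices on the edge above a minimum are attached to the very component born at that minimum, so they cannot be attached first. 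Thus the step ``push $u$ upward using $(1)$, $(2')$, $(3)$ until it sits below an index-$1$ vertex'' is not a legal sequence of the paper's modifications, and your fallback (``make $f$ ordered below the lowest index-$1$ level'', then ``slide one minimum upward until the two merge'') relies on the same forbidden move. You also do not address the case where the first degree-$3$ vertex above a minimum has outdegree $2$ (index $n-1$), in which case no amount of commuting degree-$2$ vertices produces the cancelling configuration.

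The paper's proof reverses the direction of motion: it selects the \emph{lowest branching} vertex $v$ of index $1$ and moves $v$ \emph{down} toward two minima using $(2)$, $(4)$ and $(6)$, which is permitted since the index-$1$ point is the one of smaller index. The branching condition (two disjoint decreasing paths from $v$ to two distinct minima) is the missing selection principle in your argument: it guarantees that when $v$ passes an index-$(n-1)$ vertex the two relevant edges are not the two sides of a bigon, so modification $(6)$ applies rather than $(7)$ and no cycle is consumed; and it guarantees that after clearing both incoming edges $v$ really becomes adjacent to two minima, so that $(8)$ (Lemma \ref{lemma:product_triad-cancellation_of_handles}) cancels $v$ against one of them. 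Your bookkeeping (the count $\Delta_3^{\operatorname{in}}=\beta_1+k_0-1$ from Lemma \ref{lemma:number_of_cycles_Delta_3}, preservation of $\beta_1$ by Remark \ref{remark:preserving_branching}, termination, and duality for maxima) is all fine, but the core step of producing the cancelling pair needs to be replaced by the downward motion of a branching index-$1$ vertex.
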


\begin{proof}
	We take the lowest branching vertex $v$ of degree $3$ and index $1$ and we move it down using the modifications $(2)$, $(4)$ and $(6)$ so that it is adjacent to two minima. Since $v$ is branching, the modification $(6)$ can be used and we do not have to use $(7)$. Also $v$ will still be branching after $(4)$, since it is the lowest vertex with this property (see Remark \ref{remark:preserving_branching}). Then we use $(8)$ to remove $v$ and a one minimum and we repeat this procedure for each branching vertex of index 1.
	
	It is an easy exercise to show that the so-obtained Reeb graph has exactly one minimum. The proof for maxima is analogous. 
\end{proof}

By the above lemma $\reeb{M}$ can be attained by simple Morse functions with one minimum and maximum. By Lemma \ref{lemma:number_of_cycles_Delta_3} for such a function $f\colon M \to \R$ we have $\beta_1(\reeb{f}) = \frac{\Delta_3(f)}{2}$, where $\Delta_3(f)$ is the number of vertices of degree $3$ in $\reeb{f}$. Therefore
\begin{eqnarray*}
	\reeb{M} = \max \left\{\, \frac{\Delta_3(f)}{2}\ \ \begin{tabular}{|c}
		$f\colon M \to \R$ a simple Morse function \\
		with one minimum and maximum 
	\end{tabular} \right\}\!.
\end{eqnarray*}

\begin{definition}[{cf. \cite[Definition 1.3]{Fabio-Landi}, \cite{Kudryavtseva}}]
	The graph shown in Figure \ref{figure:canonical_form}~(a) is called \textbf{the canonical graph} (with a given cycle rank). A graph is in a \textbf{canonical form} if it is homeomorphic to the canonical graph and the homeomorphism adds vertices of degree $2$ only on non-cyclic edges.	
	\begin{figure}[H]
		\centering
		\begin{tikzpicture}[scale=0.9]
			
			%%%%%%%%%%%%%  (a)
			
			\filldraw (0,0) circle (1.7pt);
			\filldraw (0,0.7) circle (1.7pt);
			\filldraw (0,1.7) circle (1.7pt);
			\filldraw (0,3) circle (1.7pt);
			\filldraw (0,4) circle (1.7pt);
			\filldraw (0,4.7) circle (1.7pt);
			\filldraw (0,5.7) circle (1.7pt);
			\filldraw (0,6.4) circle (1.7pt);
			
			\draw (0,0) -- (0,0.7);
			
			\draw (0,1.7) to[out=200,in=160] (0,0.7);
			\draw (0,1.7) to[out=340,in=20] (0,0.7);
			\draw (0,2) -- (0,1.7);
			\draw (0,2.7) -- (0,3);
			\draw (0,4) to[out=200,in=160] (0,3);
			\draw (0,4) to[out=340,in=20] (0,3);
			\draw (0,4.7) -- (0,4);
			\draw (0,5.7) to[out=200,in=160] (0,4.7);
			\draw (0,5.7) to[out=340,in=20] (0,4.7);
			\draw (0,6.4) -- (0,5.7);
			
			\draw (0,2.2) node { .};
			\draw (0,2.35) node { .};
			\draw (0,2.5) node { .};

			\draw (0,-0.5) node{(a)};
			
			%%%%%%%%%%%%%  (b)

			\filldraw (3,0) circle (1.7pt);
			\filldraw (3,0.7) circle (1.7pt);
			\filldraw (3,1.7) circle (1.7pt);
			\filldraw (3,3) circle (1.7pt);
			\filldraw (3,4) circle (1.7pt);
			\filldraw (3,4.7) circle (1.7pt);
			\filldraw (3,5.7) circle (1.7pt);
			\filldraw (3,6.4) circle (1.7pt);
			
			\draw (3,0) -- (3,0.7);
			
			\draw (3,1.7) to[out=200,in=160] (3,0.7);
			\draw (3,1.7) to[out=340,in=20] (3,0.7);
			\draw (3,2) -- (3,1.7);
			\draw (3,2.7) -- (3,3);
			\draw (3,4) to[out=200,in=160] (3,3);
			\draw (3,4) to[out=340,in=20] (3,3);
			\draw (3,4.7) -- (3,4);
			\draw (3,5.7) to[out=200,in=160] (3,4.7);
			\draw (3,5.7) to[out=340,in=20] (3,4.7);
			\draw (3,6.4) -- (3,5.7);
			
			\draw (3,2.2) node { .};
			\draw (3,2.35) node { .};
			\draw (3,2.5) node { .};
			
			\filldraw (3,0.4) circle (1.7pt);
			\filldraw (3,1.9) circle (1.7pt);
			\filldraw (3,4.25) circle (1.7pt);
			\filldraw (3,4.5) circle (1.7pt);
			\filldraw (3,6.1) circle (1.7pt);

			\draw (3,-0.5) node{(b)};

			%%%%%%%%%%%%% (c)

			\filldraw (6,0) circle (1.7pt);
			\filldraw (6,0.7) circle (1.7pt);
			\filldraw (6,1.7) circle (1.7pt);
			\filldraw (6,3) circle (1.7pt);
			\filldraw (6,4) circle (1.7pt);
			\filldraw (6,4.7) circle (1.7pt);
			\filldraw (6,5.7) circle (1.7pt);
			\filldraw (6,6.4) circle (1.7pt);
			
			\draw (6,0) -- (6,0.7);
			
			\draw (6,1.7) to[out=200,in=160] (6,0.7);
			\draw (6,1.7) to[out=340,in=20] (6,0.7);
			\draw (6,2) -- (6,1.7);
			\draw (6,2.7) -- (6,3);
			\draw (6,4) to[out=200,in=160] (6,3);
			\draw (6,4) to[out=340,in=20] (6,3);
			\draw (6,4.7) -- (6,4);
			\draw (6,5.7) to[out=200,in=160] (6,4.7);
			\draw (6,5.7) to[out=340,in=20] (6,4.7);
			\draw (6,6.4) -- (6,5.7);
			
			\draw (6,2.2) node { .};
			\draw (6,2.35) node { .};
			\draw (6,2.5) node { .};
			
			\filldraw (6,0.4) circle (1.7pt);
			\filldraw (6,4.25) circle (1.7pt);
			\filldraw (6,6.1) circle (1.7pt);

			\filldraw (6.26,3.5) circle (1.7pt);
			\filldraw (5.74,1.2) circle (1.7pt);

			\draw (6,-0.5) node{(c)};

			%%%%%%%%%%%%%
		\end{tikzpicture}
		
		\caption{(a) the canonical graph; (b)  graph in a canonical from; (c) graph not in a~canonical form.}\label{figure:canonical_form}
	\end{figure}
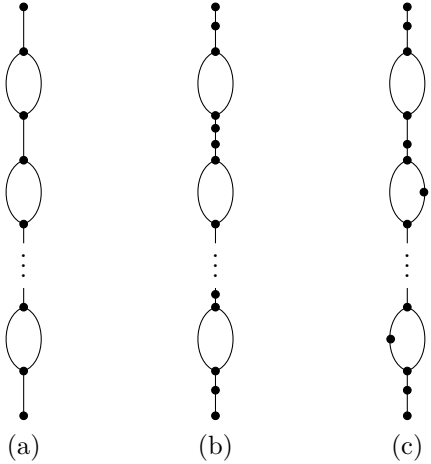
\end{definition}

The canonical graph with cycle rank equal to $g$ is the Reeb graph of a height function on a~closed orientable surface of genus $g$. A tree is in a canonical form if it is a~path.

The following proposition is similar to the ones for orientable surfaces which were shown by Kudryavtseva \cite[Theorem 1]{Kudryavtseva} and Fabio--Landi \cite[Lemma 2.6]{Fabio-Landi}.

\begin{proposition}\label{proposition:canonical_form}
	Let $f\colon M \rightarrow \R$ be a simple Morse function on a closed manifold $M$ of dimension $n\geq3$. Then $f$ can be modified using a finite number of combinatorial modifications to a simple Morse function whose Reeb graph has the same cycle rank and is in a~canonical form.
	
\end{proposition}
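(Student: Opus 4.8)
The plan is to normalize $f$ first and then strip off the cycles one at a time. By Lemma~\ref{lemma:modification_to_one_minimum_maximum} I would begin by modifying $f$, without changing the number of cycles, into a simple Morse function with exactly one minimum and one maximum. Writing $g := \beta_1(\reeb{f})$, Lemma~\ref{lemma:number_of_cycles_Delta_3} then forces $\Delta_3^{\operatorname{in}} = \Delta_3^{\operatorname{out}} = g$, so $\reeb{f}$ has exactly $g$ splitting vertices (degree $3$, index $n-1$) and $g$ merging vertices (degree $3$, index $1$). The key structural observation is that, with a single minimum and a single maximum, \emph{no} vertex of degree $3$ is branching: a branching index-$1$ (resp. index-$(n-1)$) vertex would by definition yield two distinct minima (resp. maxima). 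Consequently the two upward edges leaving any splitting vertex must reconverge, and the two downward edges entering any merging vertex must reconverge.

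I would then argue by induction on $g$. If $g=0$, then $\Delta_3 = 0$, so all interior vertices have degree $2$ and the graph has exactly two leaves (the minimum and maximum); hence it is a path, which is already in canonical form. For $g \ge 1$, the aim of the inductive step is to produce, using only the cycle-preserving modifications (all of Figure~\ref{figure:modifications} and Figure~\ref{figure:modifications_product_triad} except~$(7)$; see Remark~\ref{remark:preserving_branching}), a configuration whose bottom consists of the minimum, a single non-cyclic edge, and a clean bigon $v\cup w$, with $v$ a splitting vertex and $w$ a merging vertex. Let $v$ be the lowest splitting vertex. Since $v$ is non-branching and the minimum is unique, there is no degree-$3$ vertex below $v$ (a merge below $v$ would have two downward paths to distinct minima), so $v$ is joined to the minimum by a single edge, possibly carrying degree-$2$ vertices. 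As $v$ is non-branching, its two upward branches reconverge at some merging vertex $w$; I would bring $v$ and $w$ together — sliding $v$ upward past degree-$2$ vertices via $(3)$ and past higher splitting vertices via $(3')$ and $(5)$, and symmetrically sliding $w$ downward via $(2)$, $(2')$ and $(4)$ — until $w$ sits directly above $v$ and both branches of $v$ enter $w$, which is precisely the left-hand side of case~$(7)$, i.e. a bigon. Finally I would clear any degree-$2$ vertices lying on the two cyclic edges by pushing them down through $v$ via $(3)$ onto the non-cyclic edge above the minimum, so that the bigon is clean.

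Once this bottom bigon is isolated, the part of $\reeb{f}$ lying above $w$ is itself the Reeb graph of a simple Morse function on the triad $M^{[c,\infty)}$ for a regular value $c$ just above $v$; it has $w$ as its unique minimum, the original maximum as its unique maximum, no branching vertices, and exactly $g-1$ cycles. Applying the inductive hypothesis to it and stacking the result on top of the bottom bigon yields a graph homeomorphic to the canonical graph in which degree-$2$ vertices occur only on non-cyclic edges, as required.

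The main obstacle is the bigon-formation step. The difficulty is that the two upward branches of the lowest splitting vertex $v$ need not reconverge immediately: before closing up they may pass through several further splitting and merging vertices, so one must show that the swap moves can always be sequenced to bring $v$ directly beneath a matching merge without ever invoking modification~$(7)$ (which would destroy a cycle) and without an intermediate move disconnecting the relevant triad or creating new branching that blocks further sliding. Keeping track of how each case in Figure~\ref{figure:modifications} acts on the cyclic and non-cyclic edges incident to $v$ and $w$, and verifying that one genuinely reaches the configuration of case~$(7)$, is the heart of the argument; this is the higher-dimensional analogue of the reductions to canonical form obtained for orientable surfaces by Kudryavtseva~\cite{Kudryavtseva} and Fabio--Landi~\cite{Fabio-Landi}.
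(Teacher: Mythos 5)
Your setup is sound and matches the paper's: normalize to a single minimum and maximum via Lemma \ref{lemma:modification_to_one_minimum_maximum}, note via Lemma \ref{lemma:number_of_cycles_Delta_3} that there are exactly $g$ splitting and $g$ merging vertices, observe that uniqueness of the extrema forces every degree-$3$ vertex to be non-branching, and then isolate the cycles one by one as clean bigons while avoiding modification $(7)$. The gap is exactly where you say it is, and it is not a formality: your choice of pivot makes the bigon-formation step genuinely harder than it needs to be. Taking $v$ to be the \emph{lowest splitting} vertex and $w$ the first reconvergence point \emph{above} it does not control the region between them: that region may contain further merging vertices, i.e.\ entire nested cycles (for instance, one upward branch of $v$ may immediately split again into a small bigon that closes up before rejoining the other branch at $w$). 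The moves you list for this step --- $(3)$, $(3')$, $(5)$ going up and $(2)$, $(2')$, $(4)$ going down --- only shuffle degree-$2$ vertices and like-typed degree-$3$ vertices past one another; none of them can move a splitting vertex past a merging vertex, so they cannot clear a nested cycle out of the way, and your sliding procedure stalls. Modification $(6)$, which is the one move that exchanges a split with a merge, is conspicuously absent from your list.

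The paper resolves this by the opposite (and not symmetric) choice of pivot: take $v$ to be the \emph{lowest merging} vertex and let $w$ be the highest splitting vertex lying on two different \emph{decreasing} paths from $v$. Because $v$ is the lowest merge, the region between $w$ and $v$ contains no merging vertices at all --- no nested cycles --- and each extra splitting vertex there lies on only one of the two paths, so it can be pushed up past $v$ by $(2)$ followed by $(6)$. The remaining splitting vertices on the trunk below $w$ are then carried up past the completed bigon by $(3)$, $(5)$ and $(6)$ as in Figure \ref{figure:omitting_cycle}, and the whole procedure is repeated for each merging vertex; no induction on the number of cycles (and hence no appeal to the statement for a triad $M^{[c,\infty)}$, which your induction would additionally require you to formulate and prove) is needed. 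To repair your argument you would either have to switch to this pivot, or add an inner induction that eliminates nested cycles first using $(6)$; as written, the ``heart of the argument'' you point to is precisely the part that is missing.
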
 

\begin{proof}
	By Lemma \ref{lemma:modification_to_one_minimum_maximum} we may assume that $f$ has exactly one minimum and maximum. If $\reeb{f}$ is a tree, then it is in a canonical form, so assume that $\beta_1(\reeb{f})\geq 1$.
	
	First, we move down (move up) all vertices of degree $2$ and of index $1$ (of index $n-1$) in $\reeb{f}$ using the modifications $(1)$, $(2')$ and $(3)$ ($(1)$, $(2)$ and $(3')$) so that below the highest vertex of degree $2$ and of index $1$ (above the lowest vertex of degree $2$ and of index $n-1$) there will be only other such vertices and the minimum (the maximum).
	
	Let $v$ be the lowest vertex of degree $3$ and of index $1$ and let $w$ be the highest vertex of degree $3$ and of index $n-1$ which meets two different decreasing paths $\gamma$ and $\delta$ starting from $v$. On paths $\gamma$ and $\delta$ there are vertices of indices $2,\ldots,n-1$. We move all of them above $v$ using the modifications $(2)$ and $(6)$ (we do not use $(4)$ and $(7)$). We obtain a graph with a neighbourhood of $v$ and $w$ as on the left side of $(7)$.
	
	On the path from $w$ to the minimum there may be other vertices of degree $3$ and of index $n-1$. Let $u$ be the highest such a vertex. Using $(3)$ we move it up just below $w$. Now, the situation is as in Figure \ref{figure:omitting_cycle} $(i)$ and we perform the modifications $(5)$ and $(6)$ as in the figure. We repeat this procedure for all such vertices $u$. Then below $w$ there are only vertices of degree $2$ and the minimum.
	
	\begin{figure}[h]
		\centering
		
		\begin{tikzpicture}[scale=1.1]
			
			%%%%%%%%%%%%%%%% (i)

			\draw (0,0.7) -- (0,0);
			\draw (0,0.7) -- (-0.5,1.4);
			\draw (-0.5,2.8) -- (-0.5,1.4);
			\draw (0,0.7) -- (0.5,1.4);
			\draw (0.5,2.1) to[out=200,in=160] (0.5,1.4);
			\draw (0.5,2.1) to[out=340,in=20] (0.5,1.4);
			\draw (0.5,2.1) -- (0.5,2.8);

			\filldraw (0,0.7) circle (1.7pt)   node[align=left, right] {$u$};
			\filldraw (0.5,1.4) circle (1.7pt)   node[align=left, right] {$w$};
			\filldraw (0.5,2.11) circle (1.7pt)   node[align=left, right] {$v$};

			\draw [->, looseness=2, snake it ] (1.4,1.5) -- (2.2,1.5);
			\draw (1.8,1.8) node{\scriptsize  (5)};
			\draw (0,-0.3) node{($i$)};
			
			%%%%%%%%%%%%%%%% (ii)
			
			\draw (3,0.7) -- (3,0);
			\draw (3,0.7) -- (3.5,1.4);
			\draw (3,2.1) -- (3.5,1.4);
			\draw (4,2.1) -- (3.5,1.4);
			\draw (4,2.1) -- (4,2.8);
			\draw (3,2.1) -- (3,2.8);
			\draw (3,2.1) to[out=200,in=160] (3,0.7);
			
			\filldraw (3.5,1.4) circle (1.7pt)   node[align=left, right] {$u$};
			\filldraw (3,0.7) circle (1.7pt)   node[align=left, right] {$w$};
			\filldraw (3,2.1) circle (1.7pt)   node[align=left, right] {$v$};
			
			\draw [->, looseness=2, snake it ] (4.4,1.5) -- (5.2,1.5);
			\draw (4.8,1.8) node{\scriptsize  (6)};
			\draw (3,-0.3) node{($ii$)};

			%%%%%%%%%%%%%%%% (iii)
			
			\draw (6,0.7) -- (6,0);
			\draw (6,1.4) -- (6,2.1);
			\draw (6.5,2.8) -- (6,2.1);
			\draw (5.5,2.8) -- (6,2.1);
			
			\draw (6,1.4) to[out=200,in=160] (6,0.7);
			\draw (6,1.4) to[out=340,in=20](6,0.7);

			\filldraw (6,2.1) circle (1.7pt)   node[align=left, right] {$u$};
			\filldraw (6,0.7) circle (1.7pt)   node[align=left, right] {$w$};
			\filldraw (6,1.4) circle (1.7pt)   node[align=left, right] {$v$};

			\draw (6,-0.3) node{($iii$)};
			
			%%%%%%%%%%%%%
			
		\end{tikzpicture}
		\caption{Moving $u$ up above the cycle created by $v$ and $w$.}\label{figure:omitting_cycle}
	\end{figure}
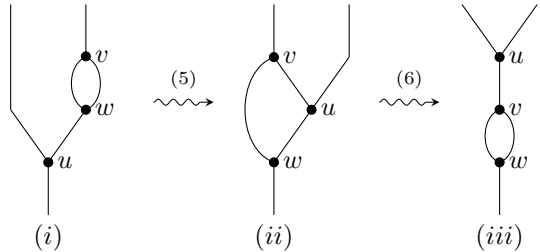
	
	Performing the entire above procedure for each vertex of degree $3$ and of index $1$ we obtain a graph in a canonical form. The cycle rank is unchanged. 
\end{proof}

\renewcommand*{\proofname}{Proof}

As a conclusion we get the main theorem of this section.

\begin{theorem}\label{theorem:each_number_of_cycles}
	Let $M$ be a closed manifold of dimension $n \geq 2$. For any number $0\leq k\leq \reeb{M}$ there exists a Morse function $g\colon M \rightarrow \R$ such that $\beta_1(\reeb{g}) = k$ and it can be simple if $M$ is not an orientable surface.
\end{theorem}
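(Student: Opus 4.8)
The plan is to begin with a function attaining the maximal number of cycles and then to destroy cycles one at a time, using the one modification that is permitted to change $\beta_1$, namely modification $(7)$ of Figure \ref{figure:modifications}. Since $\reeb{M}$ is by definition the maximum of $\beta_1(\reeb{f})$ over functions with finitely many critical points, and this maximum is attained by a simple Morse function by \cite[Lemma 3.5]{Michalak}, there is a simple Morse function $f_0\colon M\to\R$ with $\beta_1(\reeb{f_0})=\reeb{M}$. If $k=\reeb{M}$ we are done, so assume $k<\reeb{M}$. Applying Proposition \ref{proposition:canonical_form} to $f_0$, and recalling that the combinatorial modifications used there preserve the number of cycles, I obtain a simple Morse function $f$ whose Reeb graph is in canonical form and still has $\reeb{M}$ cycles.

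Next I would read off the shape of the canonical graph from Figure \ref{figure:canonical_form}(a): it is a path along which $\reeb{M}$ disjoint bigons are stacked, each bigon consisting of a lower vertex joined to an upper vertex by two arcs. In such a bigon the lower vertex has one incoming and two outgoing edges, hence outdegree $2$ and index $n-1$, while the upper vertex has indegree $2$ and index $1$; moreover no other vertex lies strictly between them. This is precisely the configuration on the left-hand side of modification $(7)$, and since the two vertices are adjacent we may rescale so that the two corresponding critical points are consecutive. Thus modification $(7)$ applies to any chosen bigon, replacing that cycle by a single monotone (non-cyclic) edge carrying a degree-$2$ vertex of index $n-1$ above a degree-$2$ vertex of index $1$, and leaving the rest of the graph untouched.

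The remaining step is bookkeeping. By Remark \ref{remark:preserving_branching} modification $(7)$ is the only one that alters $\beta_1$, and each application removes two vertices of degree $3$, hence by Lemma \ref{lemma:number_of_cycles_Delta_3} lowers the number of cycles by exactly $1$. After one application the other $\reeb{M}-1$ bigons are undisturbed, so the graph is again a stack of bigons and $(7)$ may be applied to the next one. Performing $(7)$ a total of $\reeb{M}-k$ times then yields a simple Morse function $g\colon M\to\R$ (with the same underlying manifold, only the heights and indices of the affected critical points having changed) for which $\beta_1(\reeb{g})=\reeb{M}-(\reeb{M}-k)=k$.

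The only point needing care—and in effect the only obstacle—is checking that each bigon of the canonical form genuinely matches the left side of $(7)$: the correct indices ($n-1$ below, $1$ above), no intervening vertices, and adjacency so that the two critical points can be made consecutive, together with the fact that excising one bigon does not perturb the others. Once Proposition \ref{proposition:canonical_form} is available this is immediate from the explicit picture of the canonical graph, so the theorem follows with essentially no further computation.
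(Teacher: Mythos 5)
Your proposal is correct and follows exactly the paper's argument: take a simple Morse function attaining $\reeb{M}$ cycles, bring it to canonical form via Proposition \ref{proposition:canonical_form}, and then apply modification $(7)$ a total of $\reeb{M}-k$ times, each application destroying one bigon and hence one cycle. The paper states this in two sentences; your additional verification that each bigon of the canonical graph matches the left-hand side of $(7)$ is sound but not a new idea.
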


\begin{proof}
	For $n \geq 3$ let $f\colon M \rightarrow \R$ be a simple Morse function such that $\beta_1(\reeb{f})=\reeb{M}$ and $\reeb{f}$ is in a canonical form. By $(\reeb{M} -k)$-fold use of the modification $(7)$ we get a simple Morse function $g$ such that $\reeb{g}$ has cycle rank equal to $k$. 
	
	If $n=2$, then the statement follows by the direct constructions of functions (see \cite{Edelsbrunner} and \cite[Theorem 5.6]{Michalak}). The exception for orientable surfaces comes from the fact that the Reeb graph of a simple Morse function on a closed orientable surface of genus $g$ has always cycle rank equal to $g$ (see \cite{Edelsbrunner}).
	
\end{proof}

%\begin{remark}
%If $M$ is a non-orientable surface of genus $g$, there is an explicit example of a simple Morse function with cycle rank of the Reeb graph equal to $k$ for $0\leq k\leq \reeb{M}$. Take $k$ Klein bottles with a simple Morse function whose Reeb graph has cycle rank equal to $1$ and $g-2k$ projective planes with any simple Morse function and perform the connected sum operation near extrema. The function which is the piecewise extension of the functions has Reeb graph with cycle rank equal to $k$ (cf. also~\cite{Edelsbrunner}).	
%\end{remark}

%\begin{remark}
%The above theorem is also true for non-orientable surfaces (see \cite{Edelsbrunner}), but it does not hold if $M=\Sigma_g$ is an orientable surface of genus $g$ since the Reeb graph of a simple Morse function $f\colon \Sigma_g\to \R$ has always cycle rank equal to $g$ and $\reeb{\Sigma_g}=g$ (see \cite{Edelsbrunner}, \cite{Michalak}). However, the statement of this theorem is true for $M=\Sigma_g$ if we use arbitrary Morse function (see \cite[Theorem 5.6]{Michalak}). 
%\end{remark}	

\begin{remark}
	In fact, the statement of the Theorem \ref{theorem:each_number_of_cycles} for orientable manifolds is a part of \cite[Theorem 13]{Gelbukh:Reeb_graph} which has been proven by Gelbukh.
\end{remark}

%%%%%%%%%%%%%%%%%%%%%%%%%%%%%%%%%%%%%%%%%%%%%%%%%%

%%%%%%%%%%%%%%%%%%%%%%%%%%%%%%%%%%%%%%%%%%%%%%%%%%
\section{Reeb number and corank of fundamental group}\label{section:reeb_number_and_corank}
%%%%%%%%%%%%%%%%%%%%%%%%%%%%%%%%%%%%%%%%%%%%%%%%%%

\begin{definition}\label{definition:corank}
	Let $G$ be a finitely generated group. We say that $G$ has a \textbf{free quotient} of rank $r$, if there exists an epimorphism $G \rightarrow \F_r$ onto the free group on $r$ generators. The largest such a number $r$ we call the \textbf{corank} of $G$ and we denote it by $\corank(G)$.
\end{definition}

If $f\colon M \rightarrow \R$ is a smooth function with finitely many critical points then by \cite[Proposition 5.1]{KMS} the homomorphism $q_\# \colon \pi_1(M) \rightarrow \pi_1(\reeb{f}) \cong \F_{\beta_1(\reeb{f})}$ induced on fundamental groups by the quotient map is an epimorphism. Thus $\reeb{M} \leq \corank(\pi_1(M))$. In fact, the above inequality is equality, what follows from the next theorem.

\begin{theorem}\label{thm:equivalent_conditions}
	Let $M$ be a closed manifold of dimension $n\geq 2$ and let $r\geq 0$ be an integer. The following are equivalent:
	\begin{enumerate}[(a)]
		\item There exists a Morse function $g\colon M \rightarrow \R$ such that $\beta_1(\reeb{g}) = r$.
		\item The group $\pi_1(M)$ has a free quotient of rank $r$.
		\item There exist disjoint submanifolds $N_1,\ldots,N_r \subset M$ of codimension 1 with product neighbourhoods such that $M\setminus \bigcup_{i=1}^r N_i$ is connected.
	\end{enumerate}
	Moreover, if $M$ is not an orientable surface, then the function $g$ in (a) can be taken to be simple.
\end{theorem}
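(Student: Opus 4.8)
The plan is to prove the cycle of implications $(a)\Rightarrow(b)\Rightarrow(c)\Rightarrow(a)$, and then to extract the ``simple'' clause separately. The implication $(a)\Rightarrow(b)$ is immediate from the discussion preceding the theorem: if $g$ is a Morse function, then by \cite[Proposition 5.1]{KMS} the quotient map induces an epimorphism $q_\#\colon\pi_1(M)\to\pi_1(\reeb{g})\cong\F_{\beta_1(\reeb{g})}=\F_r$, so $\pi_1(M)$ has a free quotient of rank $r$. For $(b)\Rightarrow(c)$ I would realize an epimorphism $\phi\colon\pi_1(M)\to\F_r$ by a smooth map $f\colon M\to R_r$ onto the rose with $r$ petals (a $K(\F_r,1)$), made transverse to a chosen interior point $p_i$ of each petal, and set $N_i:=f^{-1}(p_i)$; transversality gives disjoint codimension-one submanifolds with product neighbourhoods. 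That the complement is connected I would extract from the dual graph $\mathcal G$ whose vertices are the components of $M\setminus\bigcup_iN_i$ and whose edges are the components of the $N_i$: the collapse $M\to\mathcal G$ factors $f$ up to homotopy, so $\pi_1(\mathcal G)$ surjects onto $\F_r$, and since a free group admitting an epimorphism onto $\F_r$ has rank at least $r$, computing $\beta_1(\mathcal G)$ (after arranging the $N_i$ to be connected) forces a single complementary region. This part overlaps with Cornea \cite[Theorem 1]{Cornea} and Jaco \cite[Theorem 2.1]{Jaco}, which I would cite while noting the smooth rederivation.

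The heart of the argument is $(c)\Rightarrow(a)$. Given $N_1,\dots,N_r$ with product neighbourhoods $U_i\cong N_i\times(-1,1)$ and connected complement, let $W:=M\setminus\bigcup_i\bigl(N_i\times(-1,1)\bigr)$; since $M\setminus\bigcup_iN_i$ deformation retracts onto $W$, the manifold $W$ is connected, with boundary the faces $N_i\times\{\pm1\}$. The key move is to build a Morse function $\psi$ on $W$ in which each face $N_i\times\{-1\}$ is a regular level set at a value $\alpha_i$ with $W$ lying \emph{below} it, and each face $N_i\times\{+1\}$ is a regular level set at a value $\beta_i>\alpha_i$ with $W$ lying \emph{above} it. Then I extend $\psi$ over each tube $\overline{U_i}\cong N_i\times[-1,1]$ by a function of the normal coordinate alone that increases \emph{monotonically} from $\alpha_i$ to $\beta_i$. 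Because $W$ is below $N_i\times\{-1\}$ while the tube is above it (and symmetrically at the other end), each $N_i$ becomes a regular level of the glued function $g\colon M\to\R$, so $g$ is genuinely Morse with exactly the interior critical points of $\psi$ --- no degenerate critical submanifold appears. In $\reeb{g}$ each tube is an edge joining the leaf $N_i\times\{-1\}$ to the leaf $N_i\times\{+1\}$, two distinct vertices of the connected graph $\reeb{\psi}$; hence $\beta_1(\reeb{g})=\beta_1(\reeb{\psi})+r\ge r$, and in particular $\reeb{M}\ge r$.

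To obtain \emph{exactly} $r$ cycles, and the simplicity statement, I would then invoke Theorem \ref{theorem:each_number_of_cycles}: for $n\ge3$, since $0\le r\le\reeb{M}$, there is a simple Morse function $g$ with $\beta_1(\reeb{g})=r$. This proves $(a)$, closes the cycle, and simultaneously gives the ``simple'' clause for $n\ge3$. The case $r=0$ is trivial in each condition (a self-indexing function has a tree as Reeb graph). For a surface $\Sigma$ the same cut-and-tube construction still applies and gives $\reeb{\Sigma}\ge r$, after which realizability of every $r\le\reeb{\Sigma}$ follows from \cite{Michalak} and \cite{Gelbukh:Reeb_graph}, with $g$ simple in the non-orientable case; the orientable surface is genuinely excepted because every saddle of a simple Morse function on an orientable surface produces a degree-$3$ vertex, forcing $\beta_1(\reeb{f})=\mathrm{genus}(\Sigma)=\reeb{\Sigma}$ for all simple $f$, so smaller values of $r$ cannot be attained simply.

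I expect the main obstacle to be the construction of $\psi$ in $(c)\Rightarrow(a)$: one must produce a Morse function on the cut manifold $W$ with boundary faces prescribed as regular level sets co-oriented so that the tubes close up \emph{monotonically}, which is exactly what prevents the naive regluing from yielding a Morse--Bott function with a degenerate critical locus along each $N_i$, and one must then carry out the Betti-number bookkeeping for $\reeb{g}$. A secondary difficulty, in $(b)\Rightarrow(c)$, is arranging the hypersurfaces to be connected with connected complement rather than merely producing a non-separating system of total rank $\ge r$.
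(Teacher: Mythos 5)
Your architecture is the paper's: (a)$\Rightarrow$(b) via \cite[Proposition 5.1]{KMS}; (b)$\Rightarrow$(c) deferred to Cornea and Jaco; (c)$\Rightarrow$(a) by gluing monotone functions on the tubes $N_i\times[-1,1]$ to a Morse function $\psi$ on the cut manifold $W$ whose boundary faces are regular levels with prescribed co-orientations; then exactly $r$ cycles and simplicity for $n\geq 3$ via Theorem~\ref{theorem:each_number_of_cycles}, and surfaces via \cite{Michalak} and \cite{Gelbukh:Reeb_graph}. Your Euler-characteristic explanation of why orientable surfaces must be excepted from the simplicity clause is correct, and the bookkeeping $\beta_1(\reeb{g})=\beta_1(\reeb{\psi})+r$ is sound.

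The genuine gap is exactly the step you flag but do not carry out: the construction of $\psi$. It does not come for free, and it is where the content of (c)$\Rightarrow$(a) lies. Note that a Morse function on the triad $(W,\bigsqcup_i N_i^+,\bigsqcup_i N_i^-)$ gives the right co-orientations but the wrong ordering of values: each face $N_i\times\{+1\}$ (with $W$ above it) sits at the global minimum and each face $N_i\times\{-1\}$ (with $W$ below it) at the global maximum, so $\beta_i<\alpha_i$ and the tube cannot be completed monotonically without creating a degenerate critical locus. One genuinely needs each ``top'' face of $W$ to lie \emph{below} the corresponding ``bottom'' face, which forces $\psi$ to have interior extrema. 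The paper achieves this as follows: take an ordered Morse function on the triad $(W,W_-,W_+)$ with $W_\pm=\bigcup_i N_i^\pm$; by Lemma~\ref{lemma:ordered_function_is_tree_with_connected_preimage} (here $n\geq3$ is used) there is a regular value whose level $V$ is connected and separates $W_-$ from $W_+$. Splitting $W=Q_-\cup P(V)\cup Q_+$ along a product neighbourhood of $V$, one re-fibers: a simple ordered Morse function $g_-\colon Q_-\to[-2,-1]$ on the triad $(Q_-,\varnothing,\partial Q_-)$ places \emph{all} of $\partial Q_-=W_-\sqcup V_-$ at the top level $-1$, and dually $g_+\colon Q_+\to[1,2]$ places all of $\partial Q_+$ at the bottom level $1$; the tubes $P(N_i)$ and the additional tube $P(V)$ are then mapped monotonically across $[-1,1]$. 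This realizes your $\psi$ with $\alpha_i=-1<1=\beta_i$, the extra $V$-tube being what allows $Q_-$ to lie entirely below $Q_+$ while keeping the glued function defined on all of the connected manifold $M$; connectedness of $V$ moreover makes $q(M\setminus\bigcup_i N_i)$ contractible (Proposition~\ref{proposition:ordered_has_tree}), giving $\beta_1(\reeb{g})=r$ directly. Without this, or an equivalent construction, your proof of (c)$\Rightarrow$(a) is incomplete.
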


\begin{remark}
	The equivalence of conditions (b) and (c) has been described by Cornea \cite[Theorem 1]{Cornea} and for combinatorial manifolds by Jaco \cite[Theorem~2.1]{Jaco}. The main part of the below proof is showing that (c) implies (a).
\end{remark}

\begin{proof}
	The case $r=0$ is provided by Proposition \ref{proposition:ordered_has_tree}. For (a) implies (b), if $g \colon M \rightarrow \R$ is a Morse function and $\beta_1(\reeb{g}) = r$ then by \cite[Proposition 5.1]{KMS} the quotient map $q \colon M \rightarrow \reeb{g}$ induces the epimorphism $q_\# \colon \pi_1(M) \rightarrow \pi_1(\reeb{g}) \cong \F_r$.	
	The implication from (b) to (c) follows by \cite[Theorem 1]{Cornea}.

	It remains to prove that (c) implies (a). Let $P(N_i) \subset M$ be a closed product neighbourhood of  $N_i$ in $M$, i.e. $\ P(N_i)\cong N_i \times [-1,1]$, so small that $P(N_i) \cap P(N_j) = \emptyset$ for $i\neq j$. Denote by $N_i^\pm$ the submanifolds of $M$ correspoding to $N_i \times \{\pm1\}$. By the assumptions, the compact manifold $W := M \setminus \bigcup_{i=1}^{r} \Int P(N_i)$ is connected with boundary $\partial W = W_- \sqcup W_+$, where $W_\pm = \bigcup_{i=1}^r N_i^\pm$. Let $f\colon W \rightarrow [a,b]$ be an ordered Morse function of the triad $(W,W_-,W_+)$. By Lemma \ref{lemma:ordered_function_is_tree_with_connected_preimage} for $n\geq3$ there is a regular value $a < c < b$ such that $V := f^{-1}(c)$ is a connected submanifold of $W$ of codimension 1 (for $n=2$ we take arbitrary $c$ and then $V$ may not be connected). Let $$P(V) := f^{-1}([c-\varepsilon,c+\varepsilon]) \cong V \times [c-\varepsilon,c+\varepsilon]$$ be a~product neighbourhood of $V$ for small $\varepsilon >0$ and let $V_\pm := f^{-1}(c\pm\varepsilon)$.

	Manifolds $Q_- := f^{-1}([a,c-\varepsilon])$ and $Q_+ := f^{-1}([c+\varepsilon,b])$ have the boundary $\partial Q_\pm = V_\pm \sqcup W_\pm$. Take simple and ordered Morse functions $$g_- \colon Q_- \rightarrow [-2,-1] \text{ on the triad } (Q_-,\emptyset,\partial Q_-),$$
	%	and
	$$g_+ \colon Q_+ \rightarrow [1,2] \text{ on the triad } (Q_+,\partial Q_+,\emptyset).$$

	Let us define functions
	$$h_i \colon P(N_i) \cong N_i \times [-1,1] \rightarrow [-1,1] \text{ by } h_i(x,t) =t,$$
	%	and a function
	$$h_V \colon V \cong V \times [c-\varepsilon,c+\varepsilon] \rightarrow [-1,1] \text{ by } h_V(x,t) = \frac{t-(c-\varepsilon)}{\varepsilon} -1.$$
	
	Since $M = \bigcup_{i=1}^{r} P(N_i) \cup P(V) \cup Q_- \cup Q_+$, we define a  function $g\colon M \rightarrow [-2,2]$ which is the piecewise extension of the functions $g_\pm$, $h_i$ and $h_V$. It is well defined and it is a simple Morse function, because on the levels $[-2,-1]$ and $[1,2]$ the functions $g_\pm$ are simple and on the levels $[-1,1]$ there are no critical points.
	
	Let $q \colon M \rightarrow \reeb{g}$ be the quotient map. Since $g^{-1}(0) = V \sqcup \bigsqcup_{i=1}^{r} N_i$, all of points $t_i := q(N_i)$ lie on different edges in $\reeb{g}$. By the assumption $M \setminus \bigcup_{i=1}^r N_i$ is connected, so its image $\Gamma := q\left(M \setminus \bigcup_{i=1}^r N_i \right)$ is also connected. Since $\reeb{g} = \Gamma \cup t_1 \cup \ldots \cup t_r$, the space $\Gamma$ has the homotopy type of graph with $r$ edges less than $\reeb{f}$. Therefore $\beta_1(\reeb{g}) = \beta_1(\Gamma)+ r$, hence $\reeb{M} \geq \beta_1(\reeb{g}) \geq r$.
	
	Now, we may use Theorem \ref{theorem:each_number_of_cycles} to obtain cycle rank equal to $r$.
	
	If $n\geq 3$ we may also note that since $V$ is connected, $M\setminus \left(\bigcup_{i=1}^r N_i \cup V\right)$ is disconnected and because $\reeb{g_\pm}$ are trees by Proposition~\ref{proposition:ordered_has_tree}, the space $\Gamma$ is contractible and $\reeb{g}$ has indeed cycle rank equal to $r$. 
\end{proof}

A straightforward conclusion is the following equality which has been proven by Gelbukh \cite[Theorem 13]{Gelbukh:Reeb_graph} for orientable manifolds.

\begin{corollary}\label{corollary:corank=reeb_number}
	If $M$ is a closed manifold, then $\reeb{M} = \corank (\pi_1(M))$. 
\end{corollary}

\begin{corollary}
	Let $M$ and $N$ be closed manifolds of dimension $n\geq 2$. Then
	\begin{enumerate}[(a)]
		\item $\reeb{M\times N} = \max\{{\reeb{M},\reeb{N}}\}$,
		\item $\reeb{M\#N} = \reeb{M} + \reeb{N}$ if $n\geq 3$,% ($\#$ denotes the connected sum operation).
	\end{enumerate}
	where $\#$ denotes the connected sum operation.
\end{corollary}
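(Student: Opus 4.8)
The plan is to deduce both equalities from the immediately preceding corollary, $\reeb{M} = \corank(\pi_1(M))$, so that everything reduces to two purely group-theoretic facts about the corank: that it takes the maximum over a direct product and that it is additive over a free product. For the topological input I would first record the standard identifications $\pi_1(M\times N) \cong \pi_1(M)\times\pi_1(N)$ and, for $n\geq 3$, $\pi_1(M\# N)\cong \pi_1(M)*\pi_1(N)$; the latter is van Kampen's theorem applied to the decomposition of $M\# N$ along the separating sphere $\es^{n-1}$, which is simply connected precisely when $n\geq 3$. This is also where the dimension hypothesis in (b) enters: for $n=2$ the gluing circle is not simply connected and additivity genuinely fails, which is consistent with the surface formula $\reeb{\Sigma}=\lfloor k/2\rfloor$ recalled earlier.

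For (a) I would prove $\corank(G\times H)=\max\{\corank G,\corank H\}$. The inequality $\geq$ is immediate by composing an epimorphism $G\to\F_r$ with the projection $G\times H\to G$. For $\leq$, take an epimorphism $\phi\colon G\times H\to \F_N$ with $N=\corank(G\times H)$ and set $A=\phi(G\times 1)$, $B=\phi(1\times H)$; these commute elementwise and generate $\F_N$. If one of them, say $B$, is trivial, then $\phi|_{G\times 1}$ surjects onto $\F_N$ and $\corank G\geq N$. Otherwise both are nontrivial, and by commutative transitivity of free groups every nontrivial element of $A\cup B$ commutes with every other, so $\F_N=\langle A,B\rangle$ is abelian, hence cyclic (an abelian subgroup of a free group is cyclic), forcing $N\leq 1$; since a nontrivial image of a single factor inside $\Z=\F_1$ is infinite cyclic, the corresponding factor already has corank $\geq N$. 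In every case $\max\{\corank G,\corank H\}\geq N$.

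For (b) I would prove $\corank(G*H)=\corank G+\corank H$. Again $\geq$ is easy: epimorphisms $G\to\F_r$ and $H\to\F_s$ assemble into $G*H\to \F_r*\F_s=\F_{r+s}$. The substantive direction is $\leq$, where the key input is that subgroups of free groups are free (Nielsen--Schreier). Given an epimorphism $\phi\colon G*H\to\F_N$ with $N=\corank(G*H)$, the images $\phi(G)$ and $\phi(H)$ are free, say of ranks $g$ and $h$, and $\phi$ restricts to epimorphisms $G\to\phi(G)\cong\F_g$ and $H\to\phi(H)\cong\F_h$, whence $\corank G\geq g$ and $\corank H\geq h$. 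On the other hand $\F_N=\langle\phi(G),\phi(H)\rangle$ is generated by $g+h$ elements, so $N=\corank(G*H)\leq g+h\leq \corank G+\corank H$.

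I expect the main obstacle to be the upper bound in (b), which is exactly the point where one must invoke that finitely generated subgroups of free groups are free together with the fact that the rank of $\F_N$ equals its minimal number of generators; the rest is bookkeeping. A secondary point requiring care is the direct-product case, where one must resist the tempting but \emph{false} generation bound $N\leq g+h$ and instead exploit that the two images commute, so that the real content is the reduction to an abelian, hence cyclic, subgroup of a free group.
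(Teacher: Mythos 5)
Your proposal is correct and follows the same route as the paper: both deduce the statement from the preceding corollary $\reeb{M}=\corank(\pi_1(M))$ together with the identities $\corank(G\times H)=\max\{\corank G,\corank H\}$ and $\corank(G*H)=\corank G+\corank H$ applied to $\pi_1(M\times N)\cong\pi_1(M)\times\pi_1(N)$ and $\pi_1(M\# N)\cong\pi_1(M)*\pi_1(N)$. The only difference is that the paper cites Cornea, Gelbukh and Jaco for these corank identities, whereas you supply correct self-contained proofs (commutative transitivity of free groups for the direct product; Nielsen--Schreier and rank counting for the free product).
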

\begin{proof}
	These statements follow from the analogous facts for the corank of fundamental group (\cite[Example 2 and 3]{Cornea} or \cite[Theorem 3.1]{Gelbukh:corank}, \cite[Theorem 3.2]{Jaco}).
\end{proof}

\begin{remark}
	The above equation for connected sum is also true if one of the surfaces is orientable, but it does not hold for non-orientable surfaces. Let $K = \R P^2 \# \R P^2$ be the Klein bootle. Then by \cite{Michalak} $\reeb{K} =1$, but $\reeb{\R P^2} =0$.
\end{remark}	

\begin{example}
	$\reeb{\#_{i=1}^g \es^1\times \es^{n-1}} = g$ and $\reeb{\#_{i=1}^g \es^1 \times \R P^{n-1}} = g$ for $n\geq 2$.
\end{example}

%%%%%%%%%%%%%%%%%%%%%%%%%%%%%%%%%%%%%%%%%%%%%%%%%%

%%%%%%%%%%%%%%%%%%%%%%%%%%%%%%%%%%%%%%%%%%%%%%%%%%
\section{Realization theorem}\label{section:realization}
%%%%%%%%%%%%%%%%%%%%%%%%%%%%%%%%%%%%%%%%%%%%%%%%%%

\begin{definition}
	
	%	An oriented graph is in an \textbf{initial form} if it is orientation-preserving homeomorphic to the graph in Figure \ref{figure:initial_graph} (with a given cycle rank).
	An oriented graph orientation-preserving homeomorphic to the graph in Figure \ref{figure:initial_graph} (with a given cycle rank) is in an \textbf{initial form} if the homeomorphism adds vertices of degree $2$ only on the two edges incident to vertices of degree $1$.
	
	\begin{figure}[h]
		\centering
		\begin{tikzpicture}[scale=0.85]

			\filldraw (0,0) circle (1.7pt);
			\filldraw (0,1) circle (1.7pt);
			\filldraw (0.5,1.5) circle (1.7pt);
			\filldraw (1,2) circle (1.7pt);
			\filldraw (1.5,2.5) circle (1.7pt);
			\filldraw (2,3) circle (1.7pt);
			\filldraw (2,4) circle (1.7pt);
			\filldraw (1.5,4.5) circle (1.7pt);
			\filldraw (1,5) circle (1.7pt);
			\filldraw (0.5,5.5) circle (1.7pt);
			\filldraw (0,6) circle (1.7pt);
			\filldraw (0,7) circle (1.7pt);
			
			\draw (0,0) -- (0,1);
			\draw (0.5,1.5) -- (0,1);
			\draw (0.5,1.5) -- (1,2);
			\draw (1.1,2.1) -- (1,2);
			\draw (1.4,2.4) -- (1.5,2.5);
			\draw (1.5,2.5) -- (2,3);

			\draw (0,7) -- (0,6);
			\draw (0.5,5.5) -- (0,6);
			\draw (0.5,5.5) -- (1,5);
			\draw (1.1,4.9) -- (1,5);
			\draw (1.4,4.6) -- (1.5,4.5);
			\draw (1.5,4.5) -- (2,4);

			\draw (2,4) to[out=200,in=160] (2,3);
			\draw (2,4) to[out=340,in=20] (2,3);

			\draw (1.5,4.5) to[out=225,in=135] (1.5,2.5);
			\draw (1,5) to[out=225,in=135] (1,2);
			\draw (0.5,5.5) to[out=225,in=135] (0.5,1.5);
			\draw (0,6) to[out=225,in=135] (0,1);

			\draw (1.34,2.34) node { .};
			\draw (1.25,2.25) node { .};
			\draw (1.16,2.16) node { .};

			\draw (1.34,4.66) node { .};
			\draw (1.25,4.75) node { .};
			\draw (1.16,4.84) node { .};
			
			%%%%%%%%%%%%%
		\end{tikzpicture}
		
		\caption{The initial graph.}\label{figure:initial_graph}
	\end{figure}
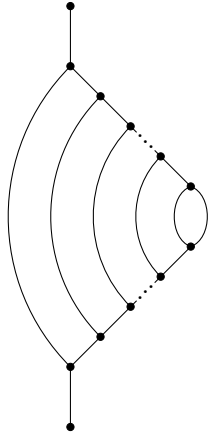
\end{definition}

The initial graph with cycle rank equal to $g$ occurs easily as the Reeb graph of a height function on an orientable surface of genus $g$. In fact, by \cite[Theorem 5.6]{Michalak} it~can be the Reeb graph of a Morse function on any closed surface with the Reeb number at least $g$.

\begin{proposition}\label{proposition:initial_graph}
	Let $M$ be a closed manifold of dimension $n\geq 3$. For any number $0 \leq k \leq \reeb{M}$ there exists a simple Morse function on $M$ whose Reeb graph is in an initial form and has cycle rank equal to $k$.
\end{proposition}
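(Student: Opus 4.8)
The plan is to reduce the statement to the machinery already developed in Sections \ref{section:combinatorial_mod} and \ref{section:reeb_number_and_corank}, namely the canonical form of Proposition \ref{proposition:canonical_form} and the cycle-counting of Lemma \ref{lemma:number_of_cycles_Delta_3}. First I would invoke Theorem \ref{theorem:each_number_of_cycles} (or directly the construction in the proof of Theorem \ref{thm:equivalent_conditions}) to obtain, for the given $k\leq\reeb{M}$, a simple Morse function $f\colon M\to\R$ with $\beta_1(\reeb{f})=k$. Applying Proposition \ref{proposition:canonical_form} and Lemma \ref{lemma:modification_to_one_minimum_maximum}, I may assume $\reeb{f}$ is in canonical form with exactly one minimum and one maximum. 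The goal is then purely combinatorial: to show that the canonical graph with $k$ cycles (Figure \ref{figure:canonical_form}(a)), which is a chain of $k$ consecutive cycles along a single vertical path, can be converted by combinatorial modifications into the initial graph with $k$ cycles (Figure \ref{figure:initial_graph}), where instead all $k$ cycles are nested and share a common pair of branching vertices in the obvious ``book'' pattern.

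The key step is to transform the linear arrangement of cycles into the nested arrangement. In the canonical graph each cycle is bounded below by a vertex of degree $3$ and index $1$ and above by a vertex of degree $3$ and index $n-1$, and these pairs are stacked one above the other. In the initial graph there is a single lowest index-$1$ branching vertex from which all $k$ descending strands emanate and a single highest index-$n-1$ branching vertex into which all ascending strands run. So the plan is to take the index-$1$ vertices of the $k$ cycles and move them all down, using modifications $(2)$ and $(3)$, so that they become consecutive just above the minimum, and dually move all the index-$(n-1)$ vertices up to sit just below the maximum. By Remark \ref{remark:preserving_branching} these modifications preserve the number of cycles and, crucially, the branching property; since moving an index-$1$ vertex downward past an index-$(n-1)$ vertex is exactly a case-$(6)$ type situation, I must verify that at each stage the relevant vertices remain branching so that the forbidden modification $(7)$ (which would destroy a cycle) is never needed.

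Once all index-$1$ vertices are clustered at the bottom and all index-$(n-1)$ vertices at the top, the resulting graph has a single long middle region and the correct nesting, which is precisely the initial form up to vertices of degree $2$ on the two extremal edges. I would identify the homeomorphism type carefully, checking that the added degree-$2$ vertices lie only on the two edges incident to the degree-$1$ vertices, as the definition of initial form demands. The main obstacle I anticipate is bookkeeping the branching condition through the reordering: I must ensure that whenever I slide an index-$1$ vertex down, it still admits two disjoint decreasing paths to distinct minima (here there is only one minimum, so the condition has to be read in terms of the local picture before the final $(8)$-type reduction), and symmetrically for the index-$(n-1)$ vertices. This is the same delicate point that Lemma \ref{lemma:modification_to_one_minimum_maximum} and the proof of Proposition \ref{proposition:canonical_form} handle, so I expect to reuse their ordering arguments almost verbatim, taking the lowest branching index-$1$ vertex first and processing upward to avoid ever using modification $(7)$. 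The remaining verification that the cycle count stays equal to $k$ throughout is immediate from Remark \ref{remark:preserving_branching} and Lemma \ref{lemma:number_of_cycles_Delta_3}.
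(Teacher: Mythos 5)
Your overall strategy is genuinely different from the paper's, and unfortunately its central step does not go through. The paper does not start from the canonical form at all: it returns to the explicit construction in the proof of (c)$\Rightarrow$(a) of Theorem \ref{thm:equivalent_conditions} with $r=k$, chooses the ordered Morse functions $g_\pm$ on $Q_\pm$ to have a single extremum each, observes via Proposition \ref{proposition:ordered_has_tree} that $\reeb{g_-}$ and $\reeb{g_+}$ are then trees all of whose degree-$3$ vertices have the correct index, and notes that gluing along the $r+1$ product neighbourhoods $P(N_i)$, $P(V)$ yields exactly the initial graph; only the degree-$2$ vertices then need to be pushed onto the right edges using $(4)$ and $(5)$. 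Your plan instead is to realize the canonical graph (via Theorem \ref{theorem:each_number_of_cycles} and Proposition \ref{proposition:canonical_form}) and then convert it into the initial graph by combinatorial modifications.

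That conversion is the gap. For $k\geq 2$ the canonical graph (a chain of bigons) and the initial graph (topologically, $k+1$ parallel strands between two fans) are not homeomorphic, so you must reorder degree-$3$ vertices, and the modifications $(2)$ and $(3)$ that you invoke only move a degree-$2$ vertex past a degree-$3$ vertex --- they never change the homeomorphism type. The reordering you actually need is to push the outdegree-$2$ vertex $a_{i+1}$ at the bottom of cycle $i+1$ below the indegree-$2$ vertex $b_i$ at the top of cycle $i$; these two share a single edge with the index-$1$ vertex \emph{below} the index-$(n-1)$ vertex, which is the right-hand side of $(6)$. You would therefore need the \emph{reverse} of $(6)$, i.e.\ to lower a critical point of index $n-1$ past one of index $1$, and Lemma \ref{lemat:ruchy} explicitly excludes this (it requires $\ind(p)\leq\ind(p')$; only $(4)$, $(5)$, $(8)$, $(9)$ are two-sided, cf.\ Remark \ref{remark:reverse_modifications_and_realizability}). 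Worse, in the canonical graph no forward modification reorders any pair of adjacent degree-$3$ vertices: the paired vertices $a_i$, $b_i$ of a bigon fall under $(7)$, which destroys the cycle, and no two vertices of the same index are adjacent, so $(4)$ and $(5)$ never apply. The canonical form is thus rigid under the allowed moves, and your reduction cannot reach the initial form. (Your description of the target is also inverted: in the initial graph the outdegree-$2$, index-$(n-1)$ vertices sit at the bottom and the indegree-$2$, index-$1$ vertices at the top, and there are $k$ of each rather than one.) To repair the argument you would have to abandon the canonical form as a starting point and build the initial form directly, which is what the paper's proof does.
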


\begin{proof}
	In the proof of implication (c) $\implies$ (a) in Theorem \ref{thm:equivalent_conditions} for $r=k$ let $g_\pm$ be simple and ordered Morse functions on $Q_\pm$ with only one critical point being extremum. Then by Proposition \ref{proposition:ordered_has_tree} the Reeb graph $\reeb{g_-}$ (resp. $\reeb{g_+}$) is a~tree with one minimum (maximum) and all vertices of degree $3$ are of index $n-1$ (of index $1$). Thus $\reeb{g}$ (where $g\colon M \to \R$ is as in the proof of Theorem \ref{thm:equivalent_conditions}) has only two vertices of degree $1$ and cycle rank equal to $k$. We move up (move down) all vertices of degree $2$ in $\reeb{g_-}$ (in $\reeb{g_+}$ respectively). Therefore by using the modifications (4) and (5) on $g|_{Q_\pm} = g_\pm$ we can obtain a simple Morse function on $M$ whose Reeb graph is in an initial form. 
\end{proof}

We say that a function $f\colon M \to \R$ \textbf{realizes} a graph $\Gamma$ with good orientation if $\reeb{f}$ is orientation-preserving homeomorphic to $\Gamma$. If it is the case, $\Gamma$ is called \textbf{realizable} on $M$ by $f$.

\begin{remark}\label{remark:reverse_modifications_and_realizability}
	From now on, we will use combinatorial modifications for arbitrary graphs with good orientations, not only for Reeb graphs. Let us note that if a graph $\Gamma$ is realizable on $M$ by a simple Morse function, then any graph $\Gamma'$ obtained from $\Gamma$ by using combinatorial modifications is also realizable on $M$. Conversely, if we want to show a realization of $\Gamma'$ and we know that $\Gamma$ is obtained from $\Gamma'$ by using the reverse combinatorial modifications, it is sufficient to show a realizability of~$\Gamma$ by a simple Morse function. Recall that the modifications $(4)$, $(5)$, $(8)$ and $(9)$ are two-sided.
\end{remark}

\begin{theorem}\label{thm:realization}
	Let $M$ be a closed, connected $n$-dimensional manifold, $n\geq 2$, and $\Gamma$ be a finite oriented graph. There exists a Morse function $f\colon M \to \R$ such that $\reeb{f}$ is orientation-preserving homeomorphic to~$\Gamma$ if and only if $\Gamma$ has a good orientation and $\beta_1(\Gamma) \leq \reeb{M}$. Moreover, if $M$ is not an orientable surface and the maximum degree of a vertex in $\Gamma$ is not greater than $3$, then $f$ can be taken to be simple. 
\end{theorem}

\begin{proof}
	The case of surfaces is provided by \cite[Theorem 5.6]{Michalak}. 
	
	Let us assume that $n\geq 3$,
	%The proof is divided into a few steps.
	Throughout the proof, we will define three additional combinatorial modifications of Reeb graphs ((10), (11) and (12)). The sketch of the proof is as follows. The first step is to reduce the considerations to graphs with vertices of degrees $1$, $2$ and $3$ (see Figure~\ref{figure:realization_from_high_degree_to_3_and_simple}). Then we will only work with simple Morse functions. Next we reduce the proof to the case of graphs whose all vertices of indegree $2$ are above vertices of outdegree $2$, as it is for the initial graph. In the last step we proceed by the induction on the number of vertices of degree~$1$. The crucial part of the proof is to show the induction step. In a graph $\Gamma$ we consider possible neighbourhoods of an edge $e$ incident to vertex $w$ of degree $3$ and to a vertex $v$ of degree $1$ (see Figure \ref{figure:cases_for_vertices_degree_1}). The problematic case is when $e$ is the only edge incoming to $w$ or outgoing from $w$. For this case if $w$ separates $\Gamma$ into three connected components, then we provide another modification of Reeb graphs which increases by $1$ the number of vertices of degree $1$ (see Figure~\ref{figure:proper_vertices}). There remains the case when $\Gamma \setminus \{w\}$ has two components for any vertices $v$ and $w$ as before. Assume that $v$ is a maximum vertex, $w$ is adjacent to $v$, $v'$ is a minimum joined with $v$ by a monotonic path and $w'$ is incident to $v'$. We show that it suffices to consider the case when all increasing paths from $v'$ to $v$ omit an edge $e$ incident to $w$ and an edge $e'$ incident to $w'$, where the edges $e$ and $e'$ are the same for all these paths. Next we provide a construction of specific function inducing the same good orientation as the original one for which the situation looks like in Figure~\ref{figure:removing_two_extrema_case_b} (a), i.e. the edge $e$ is so long that it ends below the chosen point $x$ on $e'$. Then we introduce the last modification of Reeb graphs which increases by $2$ the number of vertices of degree $1$ (see Figure~\ref{figure:removing_two_extrema_case_b}), and it completes the proof.
	
	\textbf{Step 1}. We first reduce the problem to graphs whose maximum degree is not greater than $3$ by introducing combinatorial modification number~(10). Let $\Gamma'$ be a graph which is obtained from $\Gamma$ by substituting a~small neighbourhood of each vertex $v$ in $\Gamma$ such that $\deg(v) \geq 4$ into a suitable one denoted by $S(v)$ as in Figure \ref{figure:realization_from_high_degree_to_3_and_simple}. Then $\Gamma'$ is a graph with the maximum degree not greater than $3$. If there exists a simple Morse function $f$ on $M$ which realizes $\Gamma'$, then identifying $S(v)$ with a subset of $\reeb{f}$  there are $\degout(v)-1$ vertices of index $n-1$ below $\degin(v)-1$ vertices of index $1$ in $S(v)$, all of degree $3$. Any vertex of degree $2$ in $S(v)$ can be moved outside $S(v)$ by using modifications of Reeb graps. By \cite[Theorem 4.1, 4.2 Extension and 4.4]{Milnor} we can rearrange the corresponding critical points to a single critical level of a new Morse function. Then the vertices in $S(v)$ collapse to a single vertex with neighbourhood homeomorphic to the neighbourhood of $v$ in $\Gamma$. If we perform this for all $S(v)$, then the obtained Morse function will realize $\Gamma$.
	
	\begin{figure}[h]
		\centering
		\begin{tikzpicture}[scale=1.2]

			\filldraw (0,0) circle (1.7pt)  node[align=left, right] {$v$};

			\draw (0,0) -- (-0.2,1.5);
			\draw (0,0) -- (0.6,1.5);
			\draw (0,0) -- (-0.6,1.5);
			\draw (0,0) -- (0.6,-1.5);
			\draw (0,0) -- (0,-1.5);
			\draw (0,0) -- (-0.3,-1.5);
			\draw (0,0) -- (-0.6,-1.5);
			
			\draw (0.1,1.2) node { .};
			\draw (0.2,1.2) node { .};
			\draw (0.3,1.2) node { .};
			
			\draw (0.15,-1.2) node { .};
			\draw (0.25,-1.2) node { .};
			\draw (0.35,-1.2) node { .};
			
			\draw [->, looseness=2, snake it ] (2,0) -- (1,0);
			
			%%%%%%%%%%%%%%%5	
			\filldraw (3,-1) circle (1.7pt);
			\filldraw (3.5,-0.5) circle (1.7pt);
			\filldraw (4,0) circle (1.7pt);
			\filldraw (4.5,0.5) circle (1.7pt);
			\filldraw (5,1) circle (1.7pt);
			
			\draw (3,-1) -- (3,-1.5);
			\draw (5,1) -- (5,1.5);
			
			\draw (3,-1) -- (3.5,-0.5);
			\draw (4,0) -- (3.5,-0.5);
			\draw (4,0) -- (4.5,0.5);
			\draw (5,1) -- (4.5,0.5);
			
			\draw (3,-1) -- (2.5,-0.5);
			\draw (3,0) -- (3.5,-0.5);
			\draw (4,0) -- (4.5,-0.5);
			\draw (4.5,0.5) -- (5,0);
			\draw (5.5,0.5) -- (5,1);
			
			\draw (2.5,1.5) -- (2.5,-0.5);
			\draw (3,1.5) -- (3,0);
			\draw (4.5,-1.5) -- (4.5,-0.5);
			\draw (5,-1.5) -- (5,0);
			\draw (5.5,-1.5) -- (5.5,0.5);

			\draw (4,1) node {$S(v)$}; 
			
			%	\draw (5.15,-1.2) node { .};
			%		\draw (5.25,-1.2) node { .};
			%		\draw (5.35,-1.2) node { .};
			
			\draw (3,-0.7) node { .};
			\draw (3.1,-0.6) node { .};
			\draw (3.2,-0.5) node { .};
			
			%		\draw (2.65,1.2) node { .};
			%		\draw (2.75,1.2) node { .};
			%		\draw (2.85,1.2) node { .};
			
			\draw (4.8,0.5) node { .};
			\draw (4.9,0.6) node { .};
			\draw (5,0.7) node { .};

			\draw (2.75,1.85) node {\footnotesize $\degout(v)-1$}; 
			\draw (5,-1.85) node {\footnotesize{$\degin(v)-1$}}; 
			
			\draw [decorate,decoration=brace]
			(2.4,1.57) -- (3.1,1.57);
			\draw [decorate,decoration=brace]
			(5.6,-1.57) -- (4.4,-1.57);

			\draw (1.5,-1.8) node{(10)};

		\end{tikzpicture}
		
		\caption{The combinatorial modification (10). If the Reeb graph of a simple Morse function has $S(v)$ as a subspace, then it can be modified to the Reeb graph of a Morse function in which $S(v)$ corresponds to a~neighbourhood of $v$ in $\Gamma$. }\label{figure:realization_from_high_degree_to_3_and_simple}
	\end{figure}
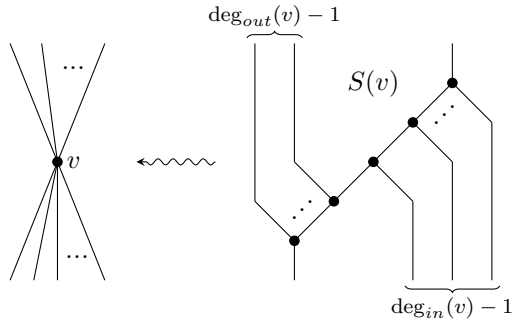
	
	Therefore we may assume that $\Gamma$ has no vertices of degrees other than $1$, $2$ and $3$. We will show that $\Gamma$ can be realized on $M$ by a simple Morse function. We may ignore vertices of degree $2$ since we are interested in a~homeomorphism type of graphs and we can always move them in a suitable way using combinatorial modifications. Thus assume that $\Gamma$ has only vertices of degrees $1$ or $3$.
	
	If there is no a vertex with indegree $2$ in $\Gamma$ below a vertex with outdegree $2$, then $\Gamma$ is called \textbf{primitive}. For example, a graph in an initial form is primitive.
	
	\textbf{Step 2}. We use the reverse modification (6) and modifications (4) and (5) (which are two-sided) on $\Gamma$ to make it primitive. Thus we reduced the problem to primitive graphs (see Remark \ref{remark:reverse_modifications_and_realizability}).

	\textbf{Step 3}. Assume $\Gamma$ to be primitive. We proceed by induction on the number of vertices of degree $1$. For the base case, suppose that $\Gamma$ has only one minimum and maximum. By Proposition \ref{proposition:initial_graph} there exists a simple Morse function $g$ on $M$ whose Reeb graph is in an initial form and has cycle rank equal to $\beta_1(\Gamma)$. By Proposition \ref{proposition:correspondece_degree_index} and Lemma \ref{lemma:number_of_cycles_Delta_3} both $\Gamma$ and $\reeb{g}$ has $\beta_1(\Gamma)$ vertices of outdegree $2$ and $\beta_1(\Gamma)$ vertices of indegree $2$. It is easily seen that by using the modifications (4) and (5) on $\reeb{g}$  we can reorder them to produce a simple Morse function which realizes $\Gamma$ as the Reeb graph.
	
	Now, let $v$ be a vertex of degree $1$ in $\Gamma$, $e$ be the edge incident to $v$ and let $w$ be the second vertex incident to $e$. If $w$ has degree $1$, then $\Gamma$ is the tree on two vertices and this case is provided by the base case. Hence we may assume that $\deg(w)=3$. We distinguish the following cases for vertices of degree $1$ in~$\Gamma$:
	\begin{enumerate}[(a)]
		\item $e$ is not the only edge which incomes to (or outgoes from) $w$,
		\item $e$ is the only edge incoming to (or outgoing from) $w$ and:
		\begin{enumerate}[(b1)]
			\item $\Gamma \setminus \{w\}$ has three connected components,
			\item $\Gamma \setminus \{w\}$ has two components.
		\end{enumerate}
	\end{enumerate}

	In the case (a) by Lemma \ref{lemma:product_triad-cancellation_of_handles} and by modifications (8) and (9) we can reduce $\Gamma$ to a graph without $v$.
	
	\begin{figure}[h]
		\centering
		\begin{tikzpicture}[scale=1]
			
			%%%%%%%%%%%%%  (a)
			
			\filldraw (0,0) circle (1.7pt) node[align=left, left] {$w$};
			\filldraw (-0.5,-1) circle (1.7pt) node[align=left, left] {$v$};

			\draw (0,0) -- (0,1);
			\draw (0,0) -- (-0.5,-1);
			\draw (0,0) -- (0.5,-1);
			
			\draw (-0.15,-0.6) node{$e$};

			\filldraw (2,0) circle (1.7pt) node[align=left, left] {$w$};
			\filldraw (1.5,1) circle (1.7pt) node[align=left, left] {$v$};

			\draw (2,0) -- (2,-1);
			\draw (2,0) -- (1.5,1);
			\draw (2,0) -- (2.5,1);
			
			\draw (1.85,0.6) node{$e$};

			\draw (1.2,-1.2) node{(a)};

			%%%%%%%%%%%%%  (ab)

			\filldraw (5,0) circle (1.7pt) node[align=left, left] {$w$};
			\filldraw (5,1) circle (1.7pt) node[align=left, left] {$v$};

			\draw (5,0) -- (5,1);
			\draw (5,0) -- (4.5,-1);
			\draw (5,0) -- (5.5,-1);
			
			\draw (5.15,0.5) node{$e$};

			\filldraw (7,0) circle (1.7pt) node[align=left, left] {$w$};
			\filldraw (7,-1) circle (1.7pt) node[align=left, left] {$v$};

			\draw (7,0) -- (7,-1);
			\draw (7,0) -- (7.5,1);
			\draw (7,0) -- (6.5,1);
			
			\draw (7.15,-0.5) node{$e$};

			\draw (6.2,-1.2) node{(b)};
			
		\end{tikzpicture}
		
		\caption{Possible configurations of the vertices $v$ and $w$ in $\Gamma$.}\label{figure:cases_for_vertices_degree_1}
	\end{figure}
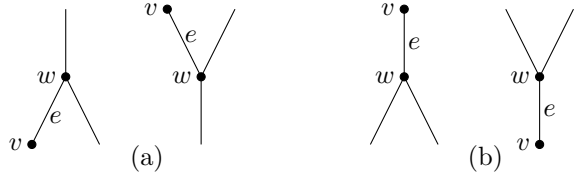

	For the case (b1) suppose that $e$ is an edge incoming to $w$ (the second case when $e$ outgoes from $w$ is analogous) Let $u_1$ and $u_2$ be other vertices adjacent to $w$ and let $\Gamma_1$, $\Gamma_2$ be connected components (except $\{v\}$) of a graph obtained from $\Gamma$ by removing $w$ and incident edges. Define $\Gamma'$ as an oriented graph obtained from $-\Gamma_1$ (i.e. $\Gamma_1$ with reverse orientation) and from $\Gamma_2$ by joining $u_1$ and $u_2$ by an edge $e'$. Figure \ref{figure:proper_vertices} shows the situation schematically. It has one vertex of degree $1$ less than $\Gamma$, so by the induction hypothesis and Step 2. (since $\Gamma'$ may not be primitive) there exists a simple Morse function $f'$ on $M$ that realizes~$\Gamma'$.
	
	\begin{figure}[H]
		\centering
		\begin{tikzpicture}[scale=1.1]

			\filldraw (0,0) circle (1.7pt) node[align=left, left] {$w$};
			\filldraw (0,-1) circle (1.7pt) node[align=left, left] {$v$};
			\filldraw (-0.5,1) circle (1.7pt) node[align=left, left, yshift=-0.4em] {$u_1$};
			\filldraw (0.5,1) circle (1.7pt) node[align=left, right, yshift=-0.4em] {$u_2$};

			\draw (0,0) -- (0,-1);
			\draw (0,0) -- (0.5,1);
			\draw (0,0) -- (-0.5,1);
			
			\draw (0.15,-0.5) node{$e$};
			
			\draw (-0.5,1.4) circle [radius=0.4] node[align=left] {$\Gamma_1$};
			\draw (0.5,1.4) circle [radius=0.4] node[align=left] {$\Gamma_2$};

			\draw (0,-1.5) node{$\Gamma$};

			\draw [->, looseness=2, snake it ] (2,0) -- (1,0);
			
			\draw (3,-0.4) -- (3,1);
			\draw (3.2,0.3) node{$e'$};
			
			\filldraw (3,-0.4) circle (1.7pt) node[align=left, left, yshift=0.4em] {$u_1$};
			\filldraw (3,1) circle (1.7pt) node[align=left, left, yshift=-0.4em] {$u_2$};

			\draw (3,-0.8) circle [radius=0.4] node[align=left] {$-\Gamma_1$};
			\draw (3,1.4) circle [radius=0.4] node[align=left] {$\Gamma_2$};

			\draw (3,-1.5) node{$\Gamma'$};

			\draw (1.5,-1.8) node{(11)};

		\end{tikzpicture}
		
		\caption{Construction of $\Gamma'$ from $\Gamma$. Realization of $\Gamma'$ as the Reeb graph of a simple Morse function implies realization of $\Gamma$, which leads to combinatorial modification number (11).}\label{figure:proper_vertices}
	\end{figure}
	
	Let $[a,b]$ be a small interval contained in $e'$ in $\reeb{f'}$ and let $G_1$, $G_2$ be the two components of $\reeb{f'} \setminus (a,b)$ such that $G_i$ corresponds to $\Gamma_i$. Divide $M$ into three submanifolds $Q_i = q^{-1}(G_i)$, $i=1,2$, and $W=q^{-1}([a,b])$ (where $q\colon M \to \reeb{f'}$ is the quotient map). The functions $f'|_{Q_1}$, $f'|_{Q_2}$ and $f'|_W$ are funtions on the smooth triads $(Q_1,\varnothing,W_-)$, $(Q_2,W_+,\varnothing)$ and $(W,W_-,W_+)$ respectively, where $W_- = q^{-1}(a)$ and $W_+ = q^{-1}(b)$ are connected submanifolds. Denote by $c$ and $d$ the levels of $f'$ for $W_-$ and $W_+$, respectively. Let $g\colon Q_1 \to \R$ be defined by $g(x)=-f'|_{Q_1}(x) + c+d$, which is a function on the triad $(Q_1,W_-,\varnothing)$ such that $g(W_-) = \{d\}$. We also need an ordered simple Morse function $h\colon W \to [d-\varepsilon,d]$ on the triad $(W,\varnothing,W_-\sqcup W_+)$ with only one critical point being extremum (here it is a minimum). By Propositions \ref{proposition:correspondece_degree_index} and \ref{proposition:ordered_has_tree} the Reeb graph $\reeb{h}$ is homeomorphic to a~neighbourhood of $w$ in $\Gamma$. Now, define a Morse function $f$ on $M$ which is the piecewise extension of $g$, $f'|_{Q_2}$ and $h$. It follows from the construction that $f$ realizes $\Gamma$. Since each component of a~level set of $f$ contains at most one critical point, $f$ can be taken to be simple. It is the combinatorial modification number (11).
	
	Now, suppose that the only vertices of degree $1$ in $\Gamma$ are from the case (b2). Suppose that $\Gamma$ has at least two maxima (a proof for minima is analogous).
	
	Let $v$ be a maximum vertex, $w$ vertex with indegree $2$ which is adjacent to $v$ and let $v'$ be a minimum joined with $v$ by a monotonic path $\tau$. Since $\Gamma$ has no vertices from the case (a), $v'$ is adjacent to a vertex $w'$ with outdegree $2$. Using the modifications (4) and (5) one can move out all vertices on $\tau$ between $w$ and $w'$. Let $x$ and $y$ be points on the edges incident to $w'$ and $w$, respectively, as in Figure~\ref{figure:removing_increasing_paths}~(a).
	
	Suppose that there exists an increasing path $\gamma$ from $x$ to $y$. Since $\Gamma$ has more than two vertices of degree $1$, there exists a vertex of degree $3$ on $\gamma$. We have the following two cases:
	\begin{enumerate}[(b2-I)]
		\item there are both the types of vertices of degree $3$ on $\gamma$,
		\item there is no vertex with outdegree $2$ or no vertex with indegree $2$ on $\gamma$.
	\end{enumerate}
	
	For the case (b2-I) let $z$ and $z'$ be vertices on $\gamma$ adjacent to $w$ and $w'$, respectively. Since $\Gamma$ is primitive, $\degin(z)=2=\degout(z')$. Use the modifications (4) and (5) to move out all vertices on $\gamma$ leaving only $z$ and $z'$, as in Figure \ref{figure:removing_increasing_paths} (b). Now, let us again use (4) to move $z$ on the second edge incident to $w$ and (5) to move $z'$ on the second edge incident to $w'$, as in Figure \ref{figure:removing_increasing_paths} (c). Thus we reduced the number of increasing paths from $x$ to $y$.
	
	For the case (b2-II) assume that there is no vertex with indegree $2$ on $\gamma$ (the second case is analogous). Let $z$ be the vertex adjacent to $w'$ with outdegree $2$. As in the previous case, we move out all vertices on $\gamma$ other than $z$ (all of them have outdegree $2$) and now $z$ is adjacent to $w$ and $w'$. Use (5) to move $z$ on the second edge incident to $w'$. Figure \ref{figure:removing_increasing_paths} (d) shows the situation.

	\begin{figure}[H]
		\centering
		\begin{tikzpicture}[scale=1.2]

			%%%%%%%%%%%%%%%% (a)		
			
			\filldraw (0,1) circle (1.7pt) node[align=left, left] {$w$};
			\filldraw (0,1.5) circle (1.7pt) node[align=left, left] {$v$};
			\filldraw (0,-1) circle (1.7pt) node[align=left, left, yshift=-0.2em] {$w'$};
			\filldraw (0,-1.5) circle (1.7pt) node[align=left, left] {$v'$};

			\draw (0,1) -- (0,1.5);
			\draw (0,-1) -- (0,-1.5);
			\draw (0,-1) -- (0.5,-0.75);
			\draw (0,1) -- (0.5,0.75);
			
			\draw (0,-1) to[out=150,in=210] (0,1);

			\filldraw (0.25,-0.875) circle (1pt) node[align=left, right,yshift=-0.2em] {$x$};
			\filldraw (0.25,0.875) circle (1pt) node[align=left, right,yshift=0.2em] {$y$};

			\draw (0.7,-0.65) node { .};
			\draw (0.8,-0.6) node { .};
			\draw (0.9,-0.55) node { .};
			
			\draw (0.7,0.65) node { .};
			\draw (0.8,0.6) node { .};
			\draw (0.9,0.55) node { .};

			\draw (0,-2.2) node{(a)};

			%%%%%%%%%%%%%%%% (b)		
			
			\filldraw (2.5,1) circle (1.7pt) node[align=left, left] {$w$};
			\filldraw (2.5,1.5) circle (1.7pt) node[align=left, left] {$v$};
			\filldraw (2.5,-1) circle (1.7pt) node[align=left, left, yshift=-0.2em] {$w'$};
			\filldraw (2.5,-1.5) circle (1.7pt) node[align=left, left] {$v'$};
			
			\draw (2.5,1) -- (2.5,1.5);
			\draw (2.5,-1) -- (2.5,-1.5);
			\draw (2.5,-1) -- (3.5,-0.5);
			\draw (2.5,1) -- (3.5,0.5);
			
			\draw (2.5,-1) to[out=150,in=210] (2.5,1);

			\filldraw (2.75,-0.875) circle (1pt) node[align=left, right,yshift=-0.4em, xshift = -0.2em] {$x$};
			\filldraw (2.75,0.875) circle (1pt) node[align=left, right,yshift=0.5em, xshift=-0.4em] {$y$};

			\filldraw (3,-0.75) circle (1.7pt) node[align=left, left, yshift=0.5em, xshift=0.3em] {$z'$};
			\filldraw (3,0.75) circle (1.7pt) node[align=left, left, yshift=-0.5em, xshift=0.2em] {$z$};
			
			\draw (3,-0.75) to[out=60,in=300] (3,0.75);

			\draw (3.6,-0.45) node { .};
			\draw (3.7,-0.4) node { .};
			\draw (3.8,-0.35) node { .};
			
			\draw (3.6,0.45) node { .};
			\draw (3.7,0.4) node { .};
			\draw (3.8,0.35) node { .};

			\draw (2.5,-2.2) node{(b)};
			
			%%%%%%%%%%%%%%%% (c)		
			
			\filldraw (5.5,1) circle (1.7pt) node[align=left, left] {$w$};
			\filldraw (5.5,1.5) circle (1.7pt) node[align=left, left] {$v$};
			\filldraw (5.5,-1) circle (1.7pt) node[align=left, left, yshift=-0.2em] {$w'$};
			\filldraw (5.5,-1.5) circle (1.7pt) node[align=left, left] {$v'$};
			\filldraw (5,-0.5) circle (1.7pt) node[align=left, left, yshift=-0.1em] {$z'$};		
			\filldraw (5,0.5) circle (1.7pt) node[align=left, left] {$z$};

			\draw (5.5,1) -- (5.5,1.5);
			\draw (5.5,-1) -- (5.5,-1.5);
			\draw (5.5,-1) -- (6,-0.75);
			\draw (5.5,1) -- (6,0.75);
			\draw (5.5,1) -- (5.,0.5);
			\draw (5.5,-1) -- (5,-0.5);
			
			\draw (5,-0.5) to[out=150,in=210] (5,0.5);
			\draw (5,-0.5) to[out=30,in=330] (5,0.5);

			\filldraw (5.75,-0.875) circle (1pt) node[align=left, right,yshift=-0.2em] {$x$};
			\filldraw (5.75,0.875) circle (1pt) node[align=left, right,yshift=0.2em] {$y$};
			
			\draw (6.2,-0.65) node { .};
			\draw (6.3,-0.6) node { .};
			\draw (6.4,-0.55) node { .};
			
			\draw (6.2,0.65) node { .};
			\draw (6.3,0.6) node { .};
			\draw (6.4,0.55) node { .};

			\draw (5.5,-2.2) node{(c)};

			%%%%%%%%%%%%%%%% (d)		
			
			\filldraw (8,1) circle (1.7pt) node[align=left, left] {$w$};
			\filldraw (8,1.5) circle (1.7pt) node[align=left, left] {$v$};
			\filldraw (8,-1) circle (1.7pt) node[align=left, left, yshift=-0.2em] {$w'$};
			\filldraw (8,-1.5) circle (1.7pt) node[align=left, left] {$v'$};
			\filldraw (7.5,-0.5) circle (1.7pt) node[align=left, left] {$z$};

			\draw (8,1) -- (8,1.5);
			\draw (8,-1) -- (8,-1.5);
			\draw (8,-1) -- (7.5,-0.5);
			\draw (8,-1) -- (8.5,-0.75);
			
			\draw (7.5,-0.5) to[out=105,in=210] (8,1);
			\draw (7.5,-0.5) to[out=345,in=330] (8,1);

			\filldraw (8.25,-0.875) circle (1pt) node[align=left, right,yshift=-0.2em] {$x$};
			\filldraw (8.22,0.7) circle (1pt) node[align=left, right,yshift=0.2em] {$y$};

			\draw (8.7,-0.65) node { .};
			\draw (8.8,-0.6) node { .};
			\draw (8.9,-0.55) node { .};

			\draw (8,-2.2) node{(d)};

		\end{tikzpicture}
		
		\caption{Reducing the number of increasing paths from $x$ to $y$.}
		\label{figure:removing_increasing_paths}
	\end{figure}
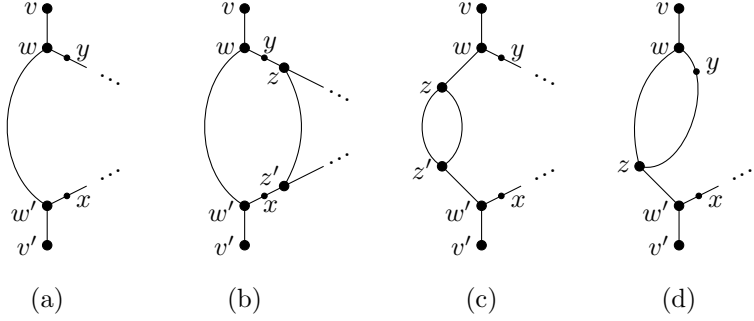
	
	For two points $p$ and $p'$ in $\Gamma$ denote by $\IP(p,p')$ ($\DP(p,p')$ respectively) the subset of $\Gamma$ consisting of images of all increasing (decreasing) paths from $p$ to $p'$. Similarly, denote by $\IP(p)$ ($\DP(p)$) the subset consisting of images of all increasing (decreasing) paths starting at $p$.
	
	Performing the above procedures for each increasing path from $x$ to $y$ we obtain a situation such that $\IP(x,y)=\varnothing$ (equivalently, $x\not\in \DP(y)$) and that $\IP(w',w) \setminus \{w',w\}$ is a connected component of $\Gamma \setminus \{w',w\}$. It is clear that if the case (b2-II) was occured, then $v'$ and $w'$ (or $v$ and $w$) are from the case (b1), so it gives a realization of $\Gamma$. Therefore we may assume that we only used a procedure from (b2-I).

	Let $g$ be a continuous function on $\Gamma$ inducing its good orientation and let $b = g(x)$ and $g(\DP(y)) = [c,d]$, where $g(y) =d$. Let $y'$ be a point near $y$ on the same edge such that $g(y') = d' < d$. We want to construct a new function $g'$ on $\Gamma$ such that $g'(y') < g'(x)$ and which induces the same orientation. Let $\varepsilon >0$, $a < b$ and $h\colon [c,d] \to [a-\varepsilon,d]$ be an orientation-preserving homeomorphism of intervals such that $[d',d]$ is mapped to $[a,d]$. Denote by $E_y$ the set of edges $e$ in $\Gamma$ whose closure $\cl(e)$ intersects $\DP(y)$ in an only one end. If $e\in E_y$, then $e$ is incident to a unique vertex $z \in \DP(y)$. Since $e \not\subset \DP(y)$, $e$ outgoes from $z$. Let $h_e$ be an orientation-preserving homeomorphism of $[g(z),d_e]$ onto $[h(g(z)),d_e]$, where $g(\cl(e)) = [g(z), d_e]$. We define a continuous function $g'\colon \Gamma \to \R$ by
	
	\begin{eqnarray*}
		g' (s) =  \begin{cases}
			h(g(s))	        &\text{\ \ \ \ if $s \in \DP(y)$,}\\
			h_e(g(s))	        &\text{\ \ \ \ if $s \in e \in E_y $,}\\
			g(s)	        &\text{\ \ \ \ in other cases.}
		\end{cases}
	\end{eqnarray*}
	
	It is clear that the orientation induced by $g'$ is the same as $g$ and that $g'(y') = a < b = g'(x)$.
	
	Now, let $z'$ and $z$ be points on the edges contained in $\IP(w',w)$ and incident to $w'$ and $w$, respectively (see Figure \ref{figure:removing_two_extrema_case_b} (a)). Define an oriented graph $\Gamma'$ obtained from $\Gamma$ by:
	\begin{enumerate}	
		\item  removing an open neighbourhood of an edge incident to $w$ and $v$ (to $w'$ and $v'$) with $z$ and $y'$ ($z'$ and $x$) as boundary points,
		\item taking $A = \IP(z',z)$ with the reverse orientation,
		\item joining $y'$ with $z$ and $z'$ with $x$ by a segment.
	\end{enumerate}
	
	Figure \ref{figure:removing_two_extrema_case_b} (b) shows this construction schematically. It is evident that $\Gamma'$ has a~good orientation and has two vertices of degree $1$ less than $\Gamma$. Thus Step~2. and induction hypothesis give us a realization of $\Gamma'$ by a simple Morse function~$f'$.
	\begin{figure}[h]
		\centering
		\begin{tikzpicture}[scale=1.3]

			%%%%%%%%%%%%%%%% (a)		
			
			\filldraw (0,1) circle (1.7pt) node[align=left, left] {$w$};
			\filldraw (0,1.5) circle (1.7pt) node[align=left, left] {$v$};
			\filldraw (0,-1) circle (1.7pt) node[align=left, left, yshift=-0.2em] {$w'$};
			\filldraw (0,-1.5) circle (1.7pt) node[align=left, left] {$v'$};

			\draw (0,1) -- (0,1.5);
			\draw (0,-1) -- (0,-1.5);
			\draw (0,-1) -- (0.5,-0.5);
			\draw (0,1) -- (3,-2);
			\draw (0,-1) -- (-0.35,-0.65);
			\draw (0,1) -- (-0.35,0.65);
			ä
			
			\filldraw (0.25,-0.75) circle (1pt) node[align=left, right,yshift=-0.2em] {$x$};
			\filldraw (0.25,0.75) circle (1pt) node[align=left, right,yshift=0.2em] {$y$};
			\filldraw (2.7,-1.7) circle (1pt) node[align=left, right,yshift=0.2em] {$y'$};
			
			\filldraw (-0.35,-0.65) circle (1pt) node[align=left, left,yshift=-0.3em, xshift=0.1em] {$z'$};
			\filldraw (-0.35,0.65) circle (1pt) node[align=left, left,yshift=0.4em, xshift=0.2em] {$z$};
			
			\draw (-0.35,0) node{$A$};
			\draw (-0.35,0) circle [x radius=0.5cm, y radius=0.65cm];
			
			\draw (3.1,-2.1) node { .};
			\draw (3.2,-2.2) node { .};
			\draw (3.3,-2.3) node { .};
			
			\draw (0.6,-0.4) node { .};
			\draw (0.7,-0.3) node { .};
			\draw (0.8,-0.2) node { .};

			\draw (0,-2) node{$\Gamma$};
			\draw (0,-2.7) node{(a)};

			\draw [->, looseness=2, snake it ] (4,0) -- (3,0);

			%%%%%%%%%%%%%%%% (b)		

			\draw (6.4,-1.4) -- (7,-2);
			
			\draw (5.5,-0.5) .. controls (5,0) .. (5.3,0.3) -- (5.5,0.5);
			
			\filldraw (5.3,0.3) circle (1pt) node[align=left, left,yshift=0.2em] {$x$};
			\filldraw (6.7,-1.7) circle (1pt) node[align=left, right,yshift=0.2em] {$y'$};
			
			\filldraw (5.5,-0.5) circle (1pt) node[align=left, left,yshift=-0.3em, xshift=0.1em] {$z'$};
			\filldraw (6.4,-1.4) circle (1pt) node[align=left, right,yshift=0.2em, xshift=-0.1em] {$z$};

			\draw (5.95,-0.95) circle [x radius=0.5cm, y radius=0.63cm, rotate=45];
			\draw (5.9,-0.95) node{$-A$};

			\draw (7.1,-2.1) node { .};
			\draw (7.2,-2.2) node { .};
			\draw (7.3,-2.3) node { .};
			
			\draw (5.6,0.6) node { .};
			\draw (5.7,0.7) node { .};
			\draw (5.8,0.8) node { .};

			\draw (6,-2) node{$\Gamma'$};
			\draw (6,-2.7) node{(b)};

			\draw (3.5,-3) node{(12)};
			
		\end{tikzpicture}
		
		\caption{Construction of $\Gamma'$ from $\Gamma$. Realization of $\Gamma'$ by a simple Morse function implies realization of $\Gamma$, which leads to combinatorial modification number (12).}\label{figure:removing_two_extrema_case_b}
	\end{figure}
	
	Let $q\colon M \to \reeb{f'}$ be the quotient map, $\overline{f'}\colon \reeb{f'}\to \R$ be the induced function and let $a=\overline{f'}(x)$. Let $p$ be a point between $y'$ and $z$ in $\reeb{f'}$. By $[z',x]$, $[y',p]$ and $[p,z]$ we denote the segments joining appropriate points in $\reeb{f'}$. We will construct a simple Morse function $f$ on $M$ which realizes $\Gamma$. First, take an orientation-preserving diffeomorphism $h$ of $[\overline{f'}(y'),\overline{f'}(p)]$ onto $[\overline{f'}(y'),a+\varepsilon]$. Next, let $W=q^{-1}([p,z])$ and $W'=q^{-1}([z',x])$. Take ordered and simple Morse functions 
	$$g\colon W \to [a+\varepsilon,a+2\varepsilon]  \text{ on the triad } \left(W,q^{-1}(p) \sqcup q^{-1}(z),\varnothing\right)\!,$$
	% and 
	$$g'\colon W' \to [a-\varepsilon,a]  \text{ on the triad } \left(W',\varnothing,q^{-1}(z') \sqcup q^{-1}(x)\right)\!,$$ with exactly one critical point being extremum (maximum and minimum, respectively). By~Propositions \ref{proposition:correspondece_degree_index} and \ref{proposition:ordered_has_tree} the Reeb graphs $\reeb{g}$ and $\reeb{g'}$ are homeomorphic to a~small neighbourhoods of vertices $w$ and $w'$ in $\Gamma$, respectively. Take a~submanifold $Q = q^{-1}(-A)$ with boundary $\partial Q = q^{-1}(z) \sqcup q^{-1}(z')$ and let $h_Q$ be an orientation-reversing diffeomorphism of the interval $f'(Q) = \overline{f'}(-A) = [\overline{f'}(z),\overline{f'}(z')]$ onto $[a,a+\varepsilon]$. Now, we define a~Morse function $f$ on $M$ by
	\begin{eqnarray*}
		f (s) =  \begin{cases}
			h(f'(s))	        &\text{\ \ \ \ if $s \in q^{-1}([y',p])$,}\\
			g(s)	        &\text{\ \ \ \ if $s \in W\!$,}\\
			g'(s)	        &\text{\ \ \ \ if $s \in W'$,}\\
			h_Q(f'(s))	      &\text{\ \ \ \ if $s \in Q$,}\\
			f'(s)	        &\text{\ \ \ \ in other cases.}
		\end{cases}
	\end{eqnarray*}
	
	It is easily seen that $f$ realizes $\Gamma$ and can be changed to be simple since connected components of level sets contains at most one critical point. 
\end{proof}

\begin{remark}
	In fact, in the above proof we have shown that for any graph $\Gamma$ with good orientation there is a finite sequence of combinatorial modifications (1) -- (12) transforming the initial graph to $\Gamma$ up to vertices of degree $2$.
\end{remark}	

\section{Further directions}

In this paper we have resolved the realization problem for Reeb graphs \mbox{(Problem~\ref{main_problem})} up to orientation-preserving homeomorphism of graphs. The next natural step is to improve this result constructing a function that would realize a given graph as the Reeb graph up to isomorphism. As it can probably be done using smooth functions with also degenerate critical points, the interesting question is about a realizability up to isomorphism by a Morse function. From the point of view of \cite[Theorem 5.6]{Michalak} it should depend on some conditions on the number of vertices of degree $2$ and, for example, the homological structure of a~manifold. 

%Another challenging problem concerns the realizability by a function with prescribed level sets, i.e. each edge of a graph has assigned a closed manifold and we ask for a function whose connected components of regular level sets are diffeomorphic to the manifold assigned to the corresponding edge in the Reeb graph. For example, in the case of orientable $3$-manifolds the regular level sets are orientable surfaces, so we may equip edges of a graph with non-negative integers which indicate the genus of desired surfaces on level sets. It is quite easy task if we would like to construct a manifold together with a function which realizes a given graph with prescribed level sets (and with some necessary conditions), but it seems to be hard problem for a fixed manifold. 

%All 
The above problems can also be considered in the case of compact manifolds with boundary.

We would also ask for a description of the class of continuous functions $f\colon M \to \R$ on a manifold $M$ for which the quotient space $\reeb{f}$ is a finite graph (cf. Remark \ref{remark:reeb_nubmer_continuous}). In fact, to obtain the inequality $\beta_1(\reeb{f}) \leq \corank(\pi_1(M))$ it suffices to require that $\reeb{f}$ is semilocally simply connected and $\pi_1(\reeb{f})$ is free (cf. \cite{Gelbukh:filomat}).

The last question which we would like to point out is how to find a function on a given manifold $M$ whose Reeb graph has the maximum possible cycle rank equal to $\reeb{M}$. Proposition \ref{proposition:ordered_has_tree} shows that one can easily get a function with zero cycles in the Reeb graph. Also, the combinatorial modifications of Reeb graphs, especially the modification $(7)$, allow us to decrease the cycle rank. We are looking for a reverse method of increasing the cycle rank. Our general aim is to provide conditions on functions to ensure the maximum cycle rank of Reeb graph. For example, if $M\subset \R^N$ is embedded in the Euclidean space, then for any point $z\in \R^N$ consider a function $f_z \colon M \to \R$ given by $f_z(x) = ||x-z||^2$. It is known that it is a Morse function for almost all $z\in \R^N$. We will try to answer the following question: what is the distribution of cycle ranks among Reeb graphs of functions $f_z$? % for $z\in \R^N$?

%%%%%%%%%%%%%%%%%%%%%%

\address
	

\begin{thebibliography}{10}
	
	
	\bibitem{Biasotti}
	{Biasotti, S., Giorgi, D., Spagnuolo, M., Falcidieno, B.}:
	{\em Reeb graphs for shape analysis and applications}. 
	Theoret. Comput. Sci. 392, 5--22 (2008).
	
	\bibitem{Edelsbrunner}
	{Cole-McLaughlin, K., Edelsbrunner, H., Harer, J., Natarajan, V., Pascucci, V.}:
	{\em Loops in Reeb graphs of 2-manifolds}.
	Discrete Comput. Geom. 32, 231--244 (2004).		
	
	
	\bibitem{Cornea}
	{Cornea, O.}:
	{\em The  genus  and  the  fundamental  group  of  high dimensional  manifolds}.
	Stud. Cerc. Mat. 41, no. 3, 169--178 (1989).
	
	
	\bibitem{Fabio-Landi}
	{Di Fabio, B., Landi, C.}:
	{\em The edit distance for Reeb graphs of surfaces}.
	Discrete Comput. Geom. 55, 423--461 (2016).
	
	
	\bibitem{Gelbukh:corank}
	{Gelbukh, I.}:
	{\em The co-rank of the fundamental group: The direct product, the first Betti number, and the topology of foliations}.
	Math. Slovaca 67, 645--656 (2017).
	
	\bibitem{Gelbukh:Reeb_graph}
	{Gelbukh, I.}:
	{\em Loops in Reeb graphs of $n$-manifolds}.
	Discrete Comput. Geom. 59, no. 4, 843--863 (2018).
	
	\bibitem{Gelbukh:filomat}
	{Gelbukh, I.}:
	{\em Approximation of Metric Spaces by Reeb Graphs:
		Cycle Rank of a Reeb Graph, the Co-rank of the Fundamental Group, and Large Components of Level Sets on Riemannian Manifolds},
	Filomat 33 (2019), no. 7, 2031--2049.
	
	\bibitem{Jaco}	
	{Jaco, W.}:
	{\em Geometric realizations for free quotients}.
	J. Aust. Math. Soc. 14, no. 4, 411--418 (1972).
	
	\bibitem{KMS}
	{Kaluba, M., Marzantowicz, W., Silva, N.}:
	{\em On representation of the Reeb graph as a sub-complex of manifold}. 
	Topol. Methods Nonlinear Anal. 45, no. 1, 287--307 (2015).
	
	\bibitem{Kudryavtseva}
	{Kudryavtseva, E.\,A.}:
	{\em Reduction of Morse functions on surfaces to canonical form by smooth deformation}.
	Regul. Chaotic Dyn. 4, no. 3, 53--60 (1999).
	
	
	\bibitem{Morse-Bott}
	{Martinez-Alfaro, J., Meza-Sarmiento, I.\,S., Oliveira, R.}:
	{\em Topological classification of simple Morse Bott functions on surfaces}.
	Contemp. Math. 675, 165--179 (2016).	
	
	\bibitem{Saeki}
	{Masumoto, Y., Saeki, O.}:
	{\em A smooth function on a manifold with given Reeb graph}.
	Kyushu J. Math. 65, no. 1, 75--84 (2011). 
	
	
	\bibitem{Michalak}
	{Michalak, \L{}.\,P.}:
	{\em Realization of a graph as the Reeb graph of a Morse function on a manifold}.
	Topol. Methods Nonlinear Anal. 52 (2018), no. 2, 749--762.
	
	\bibitem{Milnor}
	{Milnor, J.\,W.}:
	{\em Lectures on the h-cobordism theorem}.
	Princeton University Press, 1965.
	
	
	\bibitem{Reeb}
	{Reeb, G.}:
	{\em Sur les points singuliers d'une forme de Pfaff compl{\`e}tement int{\'e}grable ou d'une fonction num{\'e}rique}.
	C. R. Acad. Sci. Paris 222, 847--849 (1946).
	
	
	
	\bibitem{Sharko}
	{Sharko, V.\,V.}:
	{\em About Kronrod--Reeb graph of a function on a manifold}. 
	Methods Funct. Anal. Topology 12, 389--396 (2006).
\end{thebibliography}
\end{document}